\numberwithin{equation}{section}
\newtheorem{proposition}{Proposition}[section]
\newtheorem{definition}{Definition}[section]
\newtheorem{lemma}{Lemma}[section]
\newtheorem{theorem}{Theorem}[section]
\newtheorem{corollary}{Corollary}[section]
\newtheorem{remark}{Remark}[section]
\let\pa=\partial
\def\C{\mathop{\bf C\kern 0pt}\nolimits}
\def\DD{\mathop{\bf D\kern 0pt}\nolimits}
\def\K{\mathop{\bf K\kern 0pt}\nolimits}
\def\N{\mathop{\bf N\kern 0pt}\nolimits}
\def\Q{\mathop{\bf Q\kern 0pt}\nolimits}
\newcommand{\la}{\lambda}
\newcommand{\beq}{\begin{equation}}
\newcommand{\eeq}{\end{equation}}
\newcommand{\ben}{\begin{eqnarray}}
\newcommand{\een}{\end{eqnarray}}
\newcommand{\beno}{\begin{eqnarray*}}
\newcommand{\eeno}{\end{eqnarray*}}
\newcommand{\Extend}[5]{\ext@arrow0099{\arrowfill@#1#2#3}{#4}{#5}}
\begin{document}
\title{ Scattering Theory for   Energy-Supercritical   Klein-Gordon  Equation}

\author{Changxing Miao}
\address{Institute of Applied Physics and Computational Mathematics,
P. O. Box 8009,\ Beijing,\ China,\ 100088;} \email{miao\_changxing@iapcm.ac.cn}

\author{Jiqiang Zheng}
\address{The Graduate School of China Academy of Engineering Physics, P. O. Box 2101, Beijing, China, 100088}
\email{zhengjiqiang@gmail.com} \maketitle
\begin{abstract}
In this paper, we consider the question of the global well-posedness
and scattering for the cubic Klein-Gordon equation $u_{tt}-\Delta
u+u+|u|^2u=0$ in dimension $d\geq5$. We show that if the solution
$u$ is apriorily bounded in the critical Sobolev space, that is,
$(u, u_t)\in L_t^\infty(I; H^{s_c}_x(\R^d)\times H_x^{s_c-1}(\R^d))$
with $s_c:=\frac{d}2-1>1$, then $u$ is global and scatters. The
impetus to consider this problem stems from a series of recent works
for the energy-supercritical nonlinear wave equation and nonlinear
Schr\"odinger equation. However, the scaling invariance is broken in
the Klein-Gordon equation.  We will utilize the concentration
compactness ideas to show that the proof of the global
well-posedness and scattering is reduced to disprove the existence
of the scenario: soliton-like solutions.  And such solutions are
precluded by making use of the Morawetz inequality, finite speed of
propagation and concentration of potential energy.
\end{abstract}

\begin{center}
 \begin{minipage}{120mm}
   { \small {\bf Key Words: Klein-Gordon equation;  scattering theory; Strichartz estimate; Energy supercritical; concentration compactness}
      {}
   }\\
    { \small {\bf AMS Classification:}
      {35P25, 35B40, 35Q40.}
      }
 \end{minipage}
 \end{center}
\section{Introduction}
\setcounter{section}{1}\setcounter{equation}{0} This paper is
devoted to the study of the Cauchy problem of the cubic Klein-Gordon
equation
\begin{align} \label{equ1}
\begin{cases}    \ddot{u} - \Delta u  +  u +  f(u)=  0,  \qquad  (t,x) \in
\mathbb{R}\times\mathbb{R}^d,\ d\geq5,\\
\big(u(0,x),~u_t(0,x)\big)=\big(u_0(x),~u_1(x)\big)\in
H^{s_c}(\R^d)\times H^{s_c-1}(\R^d),\end{cases}
\end{align}
where $f(u)=|u|^2u,~u$ is a real-valued function defined in
$\mathbb{R}^{1+d}$, the dot denotes the time derivative, $\Delta$ is
the Laplacian in $\mathbb{R}^{d},\ s_c:=\frac{d}2-1$.

Formally, the solution $u$ of \eqref{equ1} conserves  the energy
\begin{equation*}\label{econ}
\aligned E(u(t),\dot{u}(t))=&\frac12 \int_{\mathbb{R}^d}
\Big(\big|\dot{u}(t,x) \big|^2 +\big| \nabla u(t,x) \big|^2 + \big|
u(t,x) \big|^2 \Big)dx + \frac{1}{4} \int_{\mathbb{R}^d}
 |u|^4dx\\
\equiv&E(u_0,u_1).
\endaligned
\end{equation*}

The class of solutions to wave equation $$\ddot{u} - \Delta u +
|u|^2u=0 $$ is left invariant by the scaling
\begin{equation}\label{scale}
u(t,x)\mapsto \la\ u(\la\ t,\la\ x),\quad \forall~\la>0.
\end{equation}
Moreover, it leaves the Sobolev norm $\dot{H}^{s_c}_x(\R^d)$ with
$s_c=\frac{d}2-1$ invariant. Since
$s_c>1,$ it is called the energy-supercritical.

The scattering theory for the Klein-Gordon equation with
$f(u)=\mu|u|^{p-1}u$  has been intensively studied in
\cite{Br84,Br85,GiV85b,IMN,Na99b,Na01}. For $\mu=1$ and
\begin{equation}\label{}
   1+\frac{4}{d}<p<1+\gamma_d\frac{4}{d-2},
\quad
 \gamma_d=\left\{ \aligned
    &1 ,& 3\leq d\leq 9; \\
     &\frac{d}{d+1},& d\geq 10.
\endaligned
\right.
\end{equation}
Brenner \cite{Br85} established the scattering results in the energy
space $H_x^1(\R^d)\times L_x^2(\R^d)$, which does not contain all
subcritical cases for $d\geq 10$. Thereafter, Ginibre and Velo
\cite{GiV85b} exploited the Birman-Solomjak space $\ell^m(L^q,I,B)$
in \cite{BiS75} and the delicate estimates to improve the results in
\cite{Br85}, which covered all subcritical cases. Finally K.
Nakanishi \cite{Na99b} obtained the scattering results for the
critical case ($p=1+\frac4{d-2}$) by the strategy of induction on
energy \cite{CKSTT07} and a new Morawetz-type estimate. And
recently, S. Ibrahim, N. Masmoudi and K. Nakanishi \cite{IMN, IMN1}
utilized the concentration compactness ideas to give the scattering
threshold for the focusing ($\mu=-1$) nonlinear Klein-Gordon
equation. Their method also works for the defocusing case.

In this paper, we  consider the cubic Klein-Gordon equation in
dimension $d\geq5$, which is the energy-supercritical case. Such
results have been recently established for many other equations
including the nonlinear wave equation (NLW) and the nonlinear
Schr\"odinger equation (NLS), since Kenig-Merle \cite{KM2011} on NLW
for radial solutions in $\R^3$. We also refer to
\cite{Blut2012,Blut2011,Blut20115,DKM,KM2011D,KV2010,KV,KV2011}.

To be more precise, let us recall the results for the
energy-supercritical nonlinear wave equation
$$\ddot{u}-\Delta u+|u|^pu=0,~(t,x)\in\R\times\R^d,~p>\frac4{d-2}.$$
In dimension three, Kenig and Merle \cite{KM2011} proved that if the
radial solution $u$ is apriorily bounded in the critical Sobolev
space, that is, $(u,u_t)\in L_t^\infty(I;
\dot{H}^{s_c}_x(\R^d)\times \dot{H}^{s_c-1}_x(\R^d))$ with
$s_c:=\frac{d}2-\frac2p>1$, then $u$ is global and scatters. In
\cite{KM2011D}, they also considered the radial solutions in odd
dimensions. Later, Killip and Visan \cite{KV} showed the result in
$\R^3$ for the non-radial solutions by making use of Huygens
principal and so called ``localized double Duhamel trick". Further,
they \cite{KV2011} proved the radial solution in all dimensions in
some ranges of $p$. Thereafter, Bulut
\cite{Blut2012,Blut2011,Blut20115} proved the results in dimensions
$d\geq5$ for the cubic nonlinearity (i.e. $p=2$).
 Recently, Duyckaerts, Kenig and Merle \cite{DKM} obtain such result for the focusing wave equation with radial solution
in three dimension. Their proof relies on the compactness/rigidity
method, pointwise estimates on compact solutions obtained in
\cite{KM2011}, and channels of energy arguments used by the authors
in previous works \cite{DKM2011} on the energy-critical equation.

Before stating the main result, we introduce some background
materials.
\begin{definition}[solution]\label{def1.1}
 A function $u:~I\times\R^d\to\R$ on a nonempty time
interval $I$ containing zero is a strong solution to \eqref{equ1} if
$(u,u_t)\in C_t^0(J; H^{s_c}_x(\R^d)\times H^{s_c-1}_x(\R^d))$ and
$u\in [W](J)$ $($defined in \eqref{dwi}$)$ for any compact interval
$J\subset I$ and for each $t\in I$, it obeys the Duhamel formula:
\begin{equation}\label{duhamel}
{u(t)\choose \dot{u}(t)} = V_0(t){u_0(x) \choose u_1(x)}
-\int^{t}_{0}V_0(t-s){0 \choose f(u(s))} ds,
\end{equation}
where
$$  V_0(t) = {\dot{K}(t), K(t)
\choose \ddot{K}(t), \dot{K}(t)}, \quad
K(t)=\frac{\sin(t\omega)}{\omega},\quad
\omega=\big(1-\Delta\big)^{1/2}.$$ We refer to the interval $I$ as
the lifespan of $u$. We say that $u$ is a maximal-lifespan solution
if the solution cannot be extended to any strictly larger interval.
We say that $u$ is a global solution if $I=\R.$

\end{definition}

The solution lies in the space $ [W](I)$ locally in time is natural
since by Strichartz estimate, the linear flow always lies in this
space. Also,  if a solution $u$ to \eqref{equ1} is global, with
$\|u\|_{W(\R)}<+\infty$, then it scatters in both time directions in
the sense that
 there exist  solutions $v_\pm$ of the free Klein-Gordon equation
\begin{equation}\label{le}
    \ddot{v} - \Delta v  + v =  0
\end{equation}
with $(v_\pm(0), \dot{v}_\pm(0))\in H^{s_c}_x(\R^d)\times
H^{s_c-1}_x(\R^d)$ such that
\begin{equation}\label{1.2}
\Big\|\big(u(t), \dot{u}(t)\big)-\big(v_\pm(t),
\dot{v}_\pm(t)\big)\Big\|_{H^{s_c}_x\times H^{s_c-1}_x}
\longrightarrow 0,\quad \text{as}\quad t\longrightarrow
\pm\infty.\end{equation} In view of this, we define
\begin{equation}\label{scattersize}
 S_I(u)=\|u\|_{
[W](I)}
\end{equation}
as the scattering size of $u$, where
\begin{equation}\label{dwi}
[W](I)=L_t^{\frac{2(d+1)}{d-1}}\big(I;B^{\frac{d-3}2}_{\frac{2(d+1)}{d-1},2}(\mathbb{R}^d)\big).
\end{equation}
Closely associated with the notion of scattering is the notion of
blowup:
\begin{definition}[Blowup]\label{def1.2}  Let $u:I\times\R^d\to\mathbb{C}$ be a  maximal-lifespan solution to \eqref{equ1}. If there
exists a time $t_0\in I$ such that $S_{[t_0,\sup I)}(u)=+\infty$,
then we say that the solution $u$
 blows up forward in time. Similarly, if there exists a time $t_0\in
 I$ such that $S_{(\text{inf}~
I,t_0]}(u)=+\infty$, then we say that  $u(t,x)$ blows up backward in
time.
\end{definition}

Now we state our main result.

\begin{theorem}\label{theorem}
Assume that $d\geq5,$ and $s_c:=\frac{d}2-1$. Let
$u:~I\times\R^d\to\R$ be a maximal-lifespan solution to \eqref{equ1}
such that
\begin{equation}\label{assume1}
\big\|(u,u_t)\big\|_{L_t^\infty(I;H^{s_c}_x(\R^d)\times
H^{s_c-1}_x(\R^d))}<+\infty.
\end{equation} Then the solution $u$ is global and  scatters.
\end{theorem}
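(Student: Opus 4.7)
The plan is to follow the concentration-compactness/rigidity roadmap initiated by Kenig and Merle, adapted to the cubic Klein-Gordon equation, where the dilation \eqref{scale} is not a symmetry of the full equation. I would argue by contradiction, assuming the theorem fails. As preliminaries I would develop three ingredients: a small-data local well-posedness and scattering theory in $H^{s_c}\times H^{s_c-1}$ based on Strichartz estimates in the space $[W](I)$ from \eqref{dwi}; a long-time perturbation/stability lemma showing that approximate solutions with bounded $[W]$-norm and small error are approximated by true solutions; and a linear profile decomposition for sequences bounded in $H^{s_c}\times H^{s_c-1}$ along the free Klein-Gordon flow $V_0(t)$.

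Using these, a now-standard Kenig-Merle argument would identify a critical threshold $A_c$ for the quantity in \eqref{assume1}, below which every maximal solution scatters, and produce a minimal almost-periodic non-scattering solution $u_c$ at this threshold: there exists a continuous path $x(t)$ such that $\{(u_c(t,\cdot+x(t)),\dot u_c(t,\cdot+x(t)))\}$ is precompact in $H^{s_c}\times H^{s_c-1}$. The subtlety specific to Klein-Gordon lies in matching nonlinear profiles to linear ones in the decomposition: the linear profiles carry scale parameters $\lambda_n$, and when $\lambda_n\to\infty$ the natural rescaling sends the mass term to lower order, so the nonlinear Klein-Gordon profile is well-approximated by a nonlinear wave profile. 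This forces me to invoke the energy-supercritical cubic NLW scattering result of Bulut \cite{Blut2012,Blut2011,Blut20115} (valid for $d\geq 5$) to show such high-frequency profiles scatter with bounded $[W]$-norms, while profiles of bounded scale are genuinely Klein-Gordon. Together with stability and minimality, only a single profile can survive, producing the desired $u_c$.

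The rigidity step excludes such a compact soliton-like $u_c$. First, finite speed of propagation and the precompactness of the orbit force sublinear growth of the translation parameter, $|x(t)|=o(|t|)$ as $|t|\to\infty$. Second, I would establish a concentration of potential energy: there exist $R_0,\eta>0$ such that $\int_{|x-x(t)|\leq R_0}|u_c(t,x)|^4\,dx\geq \eta$ for all $t$, coming from the non-triviality of $u_c$ and compactness in $H^{s_c}$ combined with Sobolev embedding. Third, a Morawetz-type identity of Lin-Strauss or Nakanishi type, localized to a light cone $\{|x|\leq R+|t|\}$ to exploit finite speed of propagation, produces a global spacetime bound on $|u_c|^4$; integrated against the uniform concentration bound over a long time interval, where $|x(t)|$ stays inside the cone by sublinearity, this yields the contradiction.

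The main obstacle, and the technical heart of the argument, is the construction and characterization of the minimal blowup solution in the absence of scaling invariance. The Klein-Gordon linear profile decomposition couples two genuinely different regimes (wave-like in the high-frequency limit, Klein-Gordon in the bounded-scale regime), and proving a clean nonlinear approximation in $[W]$ together with the elimination of all but one profile requires using the NLW theorem as a black box, which is exactly why the dimensional restriction $d\geq 5$ arises. By comparison, the rigidity step follows the spirit of \cite{KV,KM2011,IMN} but must be carried out without Galilean invariance, so the Morawetz functional must be chosen carefully to accommodate both the mass term and the moving center $x(t)$.
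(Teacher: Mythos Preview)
Your plan matches the paper's approach almost exactly: Strichartz-based local theory in $[W](I)$, a perturbation lemma, the Ibrahim--Masmoudi--Nakanishi linear profile decomposition with the dichotomy $h_\infty^j\in\{0,1\}$, elimination of the concentrating ($h_\infty^j=0$) profile by invoking Bulut's NLW result, extraction of a minimal solution with precompact orbit modulo translation, and a Morawetz-based rigidity.

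Two small points on the rigidity step where your sketch diverges from the paper. First, your claimed \emph{pointwise-in-time} lower bound $\int_{|x-x(t)|\leq R_0}|u_c(t,x)|^4\,dx\geq\eta$ does not follow from compactness in $H^{s_c}\times H^{s_c-1}$ and Sobolev embedding alone: compactness does not prevent $u_c(t_n)\to 0$ in $H^{s_c}$ while $\dot u_c(t_n)$ remains large. The paper proves instead the \emph{time-averaged} version $\int_t^{t+1}\int_{|x-x(t)|\leq C}|u_c|^4\,dx\,ds\gtrsim 1$, using that a limit with vanishing potential energy on an interval must be identically zero by the equation. Second, the paper does not establish $|x(t)|=o(|t|)$; it proves only the linear bound $|x(t_1)-x(t_2)|\leq |t_1-t_2|+2C_u$ via finite speed of propagation, and this already suffices because the classical (unlocalized) Morawetz estimate $\int_0^T\!\!\int_{\R^d}\frac{|u|^4}{|x|}\,dx\,dt\lesssim E(u,\dot u)$ combined with the time-averaged concentration yields $\log(1+T)$ growth, a contradiction. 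Your localized-cone Morawetz with sublinear $x(t)$ would also work, but is more than what is needed here.
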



\noindent{\bf The outline of the proof of Theorem \ref{theorem}:}
For any $0\leq E_0<+\infty,$ we  define
$$L(E_0):=\sup\Big\{S_I(u):~u:~I\times\R^d\to \R\ \text{such\ that}\
\sup_{t\in I}\big\|(u,u_t)\big\|_{H^{s_c}_x\times H^{s_c-1}_x}^2\leq
E_0\Big\},$$ where the supremum is taken over all solutions
$u:~I\times\R^d\to \R$ to \eqref{equ1} satisfying
$\big\|(u,u_t)\big\|_{H^{s_c}\times H^{s_c-1}}^2\leq E_0.$ Thus,
$L:\ [0,+\infty)\to [0,+\infty)$ is a non-decreasing function.
Moreover, from  the small data theory, see Theorem \ref{small}, we
know that
$$L(E_0)\lesssim E_0^\frac12\quad \text{for}\quad E_0\leq\eta_0^2,$$
where $\eta_0=\eta(d)$ is the threshold from the small data theory.

From the stability theory (see Theorem \ref{long}), we see that $L$ is
continuous. Therefore, there must exist a unique critical
$E_c\in(0,+\infty]$ such that $L(E_0)<+\infty$ for $E_0<E_c$ and
$L(E_0)=+\infty$ for $E_0\geq E_c$. In particular, if
$u:~I\times\R^d\to \R$ is a maximal-lifespan solution to
\eqref{equ1} such that $\sup\limits_{t\in
I}\big\|(u,u_t)\big\|_{H^{s_c}\times H^{s_c-1}}^2<E_c,$ then $u$ is
global and moreover,
$$S_\R(u)\leq L\big(\big\|(u,u_t)\big\|_{L_t^\infty(\R;H^{s_c}\times
H^{s_c-1})}^2\big).$$ The proof of Theorem \ref{theorem} is
equivalent to show $E_c=+\infty.$  We argue by contradiction. We
show that if $E_{c}<+\infty$, then there exists a nonlinear global
solution of \eqref{equ1} with $L_t^\infty(\R;H_x^{s_c}\times
H_x^{s_c-1})$-norm be exactly $E_{c}$. Moreover, this solution
satisfies some strong compactness properties. This is completed in
Section 4 where we utilize the profile decomposition that was
established in Ibrahim, Masmoudi and Nakanishi \cite{IMN}, and a
strategy introduced by Kenig and Merle \cite{KM}. Finally, we
utilize the finiteness of the energy to show that the solutions
obtained in Section 4 are not possible. More precisely, by Morawetz
inequality \cite{Br85, MC,MoS72}
\begin{equation}\label{moraw}
\int_0^T\int_{\R^d}\frac{|u(t,x)|^4}{|x|}dxdt\lesssim
E(u,u_t),\end{equation} we know that the left-hand side of
\eqref{moraw} is bounded by the energy for any $T>0$. On the other
hand, notice that by finite speed of propagation and concentration
of potential energy, the left-hand side of \eqref{moraw} should grow
logarithmical in time $T$. This gives a contradiction by choosing
$T$ sufficiently large.

The paper is organized as follows. In Section $2$, we deal with the
local theory
 for the equation \eqref{equ1}.  In Section $3$, we give
the linear and nonlinear profile decomposition and show some
properties of the profile. Thereafter, we extract a critical
solution in Section $4$. Finally in Section $5$, we preclude the
critical solution, which completes the proof of Theorem
\ref{theorem}.

We conclude the introduction by giving some notations which will be
used throughout this paper. We always assume the spatial dimension
$d\geq 5$ and $f(u)=|u|^2u$. For any $r, 1\leq r \leq \infty$, we
denote by $\|\cdot \|_{r}$ the norm in $L^{r}=L^{r}(\mathbb{R}^d)$
and by $r'$ the conjugate exponent defined by $\frac{1}{r} +
\frac{1}{r'}=1$. For any $s\in \mathbb{R}$, we denote by
$H^s(\mathbb{R}^d)$ the usual Sobolev space. Let $\psi\in
\mathcal{S}(\mathbb{R}^d)$ be such that $\text{supp}\
{\widehat{\psi}} \subseteq \big\{\xi: \frac{1}{2} \leq|\xi| \leq 2
\big\}$ and $ \sum_{j\in \mathbb{Z}} \widehat{\psi} (2^{-j} \xi) = 1
$ for $\xi \neq 0.$ Define $\psi_0$ by $\widehat{\psi}_0 = 1 -
 \sum_{j\geq 1} \widehat{\psi} (2^{-j} \xi).$  Thus $\text{supp}\
\widehat{\psi}_0 \subseteq \big\{\xi: |\xi| \leq 2 \big\}$ and
$\widehat{\psi}_0 = 1$ for $|\xi| \leq 1$. We denote by $\Delta_j$
and $\mathcal{P}_0$ the convolution operators whose symbols are
respectively given by $\widehat{\psi}(\xi/2^{j})$ and
$\widehat{\psi}_0(\xi)$. For $s \in \mathbb{R}, 1\leq r \leq
\infty$, the  Besov spaces $ B^{s}_{r, 2}(\mathbb{R}^d)$ and $\dot
B^{s}_{r,2}(\R^d)$ are defined by \begin{align*} B^{s}_{r,
2}(\mathbb{R}^d) =& \bigg\{ u \in \mathcal{S}'(\mathbb{R}^d),
\|\mathcal{P}_0 u\|^2_{L^r}+ \big\|2^{js} \|\Delta_j u\|_{L^r}
\big\|^2_{l^2_{j\in \mathbb{N}}} < \infty \bigg\}\\
\dot B^{s}_{r, 2}(\mathbb{R}^d) =& \bigg\{ u \in
\mathcal{S}'_h(\mathbb{R}^d), \big\|2^{js} \|\Delta_j u\|_{L^r}
\big\|^2_{l^2_{j\in \mathbb{Z}}} < \infty \bigg\}.
\end{align*} For
details of Besov space, we refer to \cite{BL76}. For any interval
$I\subset \mathbb{R}$ and any Banach space $X$ we denote by ${\mathcal
C}(I; X)$ the space of strongly continuous functions from $I$ to $X$
and by $L^q(I; X)$ the space of strongly measurable functions from
$I$ to $X$ with $\|u(\cdot); X\|\in L^q(I).$  We denote by
$\langle\cdot, \cdot\rangle$ the scalar product in $L^2$.

\section{Preliminaries}
 \setcounter{section}{2}\setcounter{equation}{0}

 \subsection{Strichartz estimate and local theory}
In this section, we consider the Cauchy problem  for the equation
$(\ref{equ1})$
\begin{equation} \label{equ2}
    \left\{ \aligned &\ddot{u} - \Delta u  +  u +  f(u)=  0, \\
    &u(0)=u_0,~\dot{u}(0)=u_1.
    \endaligned
    \right.
\end{equation}
The integral equation for the Cauchy problem $(\ref{equ2})$ can be
written as
\begin{equation}\label{inte1}
u(t)=\dot{K}(t)u_0 + K(t)u_1-\int^{t}_{0}K(t-s)f(u(s))ds,
\end{equation}
or
\begin{equation}\label{inte2}
{u(t)\choose \dot{u}(t)} = V_0(t){u_0(x) \choose u_1(x)}
-\int^{t}_{0}V_0(t-s){0 \choose f(u(s))} ds,
\end{equation}
where
$$K(t)=\frac{\sin(t\omega)}{\omega}, \quad V_0(t) = {\dot{K}(t), K(t)
\choose \ddot{K}(t), \dot{K}(t)}, \quad \omega=\big( 1-\Delta\big)^{1/2}.$$

Let $U(t)=e^{it\omega}$, then
\begin{equation*}
\dot{K}(t)= \frac{U(t)+U(-t)}{2}, \qquad  K(t)=
\frac{U(t)-U(-t)}{2i\omega}.
\end{equation*}

Now we recall the following dispersive estimate for the operator
$U(t)=e^{it\omega}$.
\begin{lemma}[\cite{Br85,GiV85b}]\label{lem21}
Let $2\leq r\leq \infty$ and $0\leq \theta\leq 1$. Then
\begin{equation*}
\big\|e^{i\omega t}f
\big\|_{B^{-(d+1+\theta)(\frac12-\frac1r)/2}_{r, 2}} \leq \mu(t)
\big\|f\big\|_{B^{(d+1+\theta)(\frac12-\frac1r)/2}_{r', 2}},
\end{equation*}
where
\begin{equation*}
\mu(t)=C \min\bigg\{ |t|^{-(d-1-\theta)(\frac12-\frac1r)_{+} },
|t|^{-(d-1+\theta)(\frac12-\frac1r)}\bigg\}.
\end{equation*}
\end{lemma}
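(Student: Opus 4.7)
The plan is to reduce the estimate to pointwise dispersive bounds on each Littlewood--Paley block via stationary phase, interpolate with the trivial $L^2\to L^2$ identity coming from Plancherel, and then reassemble using the Besov structure. First I would analyze the kernel of $\Delta_j e^{it\omega}$, namely
\[
K_j(t,x)=\int_{\R^d}e^{i(x\cdot\xi+t\langle\xi\rangle)}\,\widehat{\psi}(\xi/2^j)\,d\xi, \qquad \langle\xi\rangle=\sqrt{1+|\xi|^2}.
\]
The Hessian of $\langle\xi\rangle$ has $(d-1)$ tangential eigenvalues of order $\langle\xi\rangle^{-1}$ and one radial eigenvalue of order $\langle\xi\rangle^{-3}$; this anisotropy produces wave-like decay at high frequencies, while the non-degenerate Hessian at $\langle\xi\rangle\sim 1$ produces Schr\"odinger-like decay at low frequencies. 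Stationary phase on the high-frequency blocks $j\ge 1$ (after rescaling $\xi=2^j\eta$ and using $\langle 2^j\eta\rangle\approx 2^j|\eta|$) gives
\[
\|K_j(t,\cdot)\|_{L^\infty}\leq C\,2^{j(d+1)/2}\,|t|^{-(d-1)/2},
\]
and, at the cost of one additional derivative, the Schr\"odinger-type improvement $C\,2^{j(d+2)/2}\,|t|^{-d/2}$; the low-frequency piece $\mathcal{P}_0 e^{it\omega}$ directly satisfies $\lesssim |t|^{-d/2}$. Geometric interpolation of these pointwise $L^1\to L^\infty$ bounds with weight $\theta\in[0,1]$ produces the dyadic family
\[
\|\Delta_j e^{it\omega}f\|_{L^\infty}\leq C\,2^{j(d+1+\theta)/2}\,|t|^{-(d-1+\theta)/2}\,\|\Delta_j f\|_{L^1},
\]
which is the long-time half of the statement; the short-time half with $|t|^{-(d-1-\theta)/2}$ follows by the opposite-sign interpolation together with the trivial Bernstein bound near $t=0$.

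Next I would interpolate each dyadic inequality against the Plancherel identity $\|\Delta_j e^{it\omega}f\|_{L^2}=\|\Delta_j f\|_{L^2}$ by Riesz--Thorin. For $r\in[2,\infty]$ and $\alpha=\tfrac12-\tfrac1r$ this produces
\[
\|\Delta_j e^{it\omega}f\|_{L^r}\leq C\,2^{j(d+1+\theta)\alpha}\,|t|^{-(d-1\pm\theta)\alpha}\,\|\Delta_j f\|_{L^{r'}},
\]
for both choices of sign. The factor $2^{j(d+1+\theta)\alpha}$ is precisely the derivative weight encoded by the Besov indices of the statement (positive exponent on the source, negative on the target). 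Squaring in $\ell^2_j$ over the dyadic scales, adding the $\mathcal{P}_0$ contribution, and taking the minimum of the two time factors yields the claim with $\mu(t)$ as defined.

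The main obstacle is the stationary phase analysis on the high-frequency blocks, where the radial degeneracy of $\mathrm{Hess}\,\langle\xi\rangle$ forces a careful decomposition of the spatial variable into $|x|\ll|t|$, $|x|\sim|t|$, and $|x|\gg|t|$. The far region is non-stationary and handled by repeated integration by parts; the region $|x|\sim|t|$ carries the stationary contribution along the wave cone, where coordinates adapted to the level sets of $\langle\xi\rangle$ isolate the degenerate radial direction and restore the sharp $|t|^{-(d-1)/2}$ rate with the correct power of $2^j$. Uniformity of the constants in $j$ follows from the scaling homogeneity $\langle 2^j\eta\rangle\approx 2^j\langle\eta\rangle$ at high frequencies. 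This is a classical computation recorded in Brenner and Ginibre--Velo; the remaining interpolation and Besov summation steps are routine once the block-level bounds are in hand.
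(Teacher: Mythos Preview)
The paper does not supply its own proof of this lemma; it is quoted directly from the cited references \cite{Br85,GiV85b}. Your proposal follows precisely the classical route taken in those papers: dyadic stationary-phase bounds on $\Delta_j e^{it\omega}$ exploiting the anisotropic Hessian of $\langle\xi\rangle$, interpolation between the wave-type and Schr\"odinger-type endpoints indexed by $\theta$, Riesz--Thorin against the $L^2$ isometry, and $\ell^2_j$-summation to recover the Besov norms. Your short-time argument (Bernstein bound for $2^j|t|\lesssim 1$ combined with the wave bound for $2^j|t|\gtrsim 1$) is also the standard one. So the approach is correct and coincides with that of the original references; there is nothing further in the present paper to compare against.
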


According to the above lemma, the abstract duality and interpolation
argument(see \cite{GiV95}, \cite{KeT98}), we have the following
Strichartz estimates.
\begin{lemma}[\cite{Br85,GiV85b,MC,MZF}]\label{lem22}
Let $0\leq\theta_i \leq 1$, $\rho_i \in \mathbb{R}$, $2\leq q_i, r_i
\leq +\infty,~i=1,2$. Assume that $(\theta_i,d,q_i,r_i)\neq
(0,3,2,+\infty)$ satisfy the following admissible conditions
\begin{equation}
\left\{ \aligned \label{rl} 0\leq \frac{2}{q_i} &\leq
\min\Big\{(d-1+\theta_i)\Big(\frac{1}{2}-\frac{1}{r_i}\Big),
1\Big\},~~~i=1,2
   \\
&\rho_1+(d+\theta_1)\Big(\frac{1}{2}-\frac{1}{r_1}\Big)-\frac{1}{q_1}
=\mu,
\\&\rho_2+(d+\theta_2)\Big(\frac{1}{2}-\frac{1}{r_2}\Big)-\frac{1}{q_2}
=1-\mu.
\endaligned\right.
\end{equation}
Then, for $g \in H^\mu_x(\R^d)$, we have
\begin{align}\label{str1}
\big\| U(\cdot) g\big\|_{L^{q_1}\big(\mathbb{R}; B^{\rho_1}_{r_1, 2}
\big)} &\leq C \|g\|_{H^\mu};\\\label{str2} \big\| K_{R}\ast f
\big\|_{L^{q_1}\big(I; B^{\rho_1}_{r_1,  2} \big)} &\leq C\big\|  f
\big\|_{L^{q_2'}\big(I; B^{-\rho_2}_{r'_2, 2} \big)}.
\end{align}
where the subscript $R$ stands for retarded, and
\begin{align*}
K_R\ast f&=\int_{0}^tK(t-s)f(u(s))ds.
\end{align*}
\end{lemma}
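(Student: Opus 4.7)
The plan is to establish Lemma \ref{lem22} by the classical $TT^*$ technique combined with the Hardy-Littlewood-Sobolev inequality, applied dyadically via Littlewood-Paley decomposition so that the $L^r$ target is upgraded to a Besov target. At each dyadic block $\Delta_j$, Lemma \ref{lem21} supplies a dispersive bound of the form $\|U(t)\Delta_j g\|_{L^{r_1}} \lesssim 2^{j\alpha}\mu(t)\|\Delta_j g\|_{L^{r_1'}}$ for an appropriate $\alpha=\alpha(\theta_1,r_1)$, which interpolated with the $L^2$-isometry $\|U(t)\Delta_j g\|_{L^2}=\|\Delta_j g\|_{L^2}$ covers all $r_1\in[2,\infty]$. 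Running $TT^*$ reduces the homogeneous estimate \eqref{str1} on a single block to bounding the convolution $\int U(t-s)\Delta_j^2 F(s)\,ds$ in $L^{q_1}_t L^{r_1}_x$, which in turn reduces by Hardy-Littlewood-Sobolev in the time variable to verifying that $\mu(t)$ has the correct integrability exponent. The conditions in \eqref{rl} are exactly those ensuring the HLS exponents align.

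With the per-block estimate in hand, the target Besov norm is recovered by taking the $\ell^2$ sum over $j$; since both $q_1,r_1\geq 2$, Minkowski's inequality moves the $\ell^2$ sum inside the $L^{q_1}_t L^{r_1}_x$ norm without loss, and the low-frequency block $\mathcal{P}_0$ is treated identically (in fact more easily, as the mass term of $\omega=(1-\Delta)^{1/2}$ supplies extra regularity at frequency $\lesssim 1$). The retarded estimate \eqref{str2} is then obtained as follows: by $TT^*$ duality, the non-retarded bilinear form $\iint \langle U(t-s)f(s),g(t)\rangle\,ds\,dt$ factors through the homogeneous estimates applied to both admissible pairs $(\theta_i,q_i,r_i,\rho_i)$, $i=1,2$; promotion from the symmetric integral $\int_{\mathbb{R}}$ to the causal integral $\int_0^t$ is then supplied by the Christ-Kiselev lemma, which applies away from the double endpoint $q_1=q_2'$.

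The main obstacle lies in the admissibility bookkeeping induced by the parameter $\theta_i\in[0,1]$. Lemma \ref{lem21} provides a two-regime decay rate $\mu(t)$ interpolating between a wave-type rate (active for small $|t|$) and a Klein-Gordon-type rate (active for large $|t|$); one must split the time integral across the two regimes and apply HLS in each to obtain sharp bounds. This is the origin of the constraint $\tfrac{2}{q_i}\leq\min\{(d-1+\theta_i)(\tfrac12-\tfrac{1}{r_i}),1\}$ and of the Sobolev-loss budget $\rho_i+(d+\theta_i)(\tfrac12-\tfrac{1}{r_i})-\tfrac{1}{q_i}\in\{\mu,1-\mu\}$. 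The forbidden quadruple $(\theta_i,d,q_i,r_i)=(0,3,2,\infty)$ is the classical Klainerman-Machedon wave endpoint and must be excluded; since Theorem \ref{theorem} assumes $d\geq 5$, this exclusion plays no role in the applications of the lemma later in the paper.
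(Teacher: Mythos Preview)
Your proposal is correct and aligns with the paper's treatment: the paper does not give its own proof of this lemma but simply invokes Lemma~\ref{lem21} together with ``the abstract duality and interpolation argument (see \cite{GiV95}, \cite{KeT98})'', i.e., precisely the $TT^*$/HLS machinery you outline. Your sketch fills in the details of that citation, including the dyadic decomposition to reach Besov targets and the Christ--Kiselev step for the retarded estimate; the only caveat is that at the double endpoint $q_1=q_2=2$ one must appeal to the Keel--Tao atomic argument rather than Christ--Kiselev, but this is covered by the cited \cite{KeT98}.
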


Now it is useful to define several spaces and give estimates of the
nonlinearities in terms of these spaces. Define
$$ST(I)=[W](I),$$ where
$$[W](I)=L_t^{\frac{2(d+1)}{d-1}}\big(I;B^{\frac{d-3}2}_{\frac{2(d+1)}{d-1},2}(\mathbb{R}^d)\big).$$
In addition to the $ST$-norm, we also need the corresponding dual
norm
$$[W]^*(I)=L_t^{\frac{2(d+1)}{d+3}}\big(I;B^{\frac{d-3}2}_{\frac{2(d+1)}{d+3},2}(\mathbb{R}^d)\big).$$
Then we have by Strichartz estimate
\begin{align}\nonumber
&\big\|u\big\|_{[W](I)}+\big\|(u,u_t)\big\|_{L_t^\infty(I;
H^{s_c}_x\times H^{s_c-1}_x)}\\\label{str} \leq&
C\big\|(u_0,u_1)\big\|_{H^{s_c}_x\times H^{s_c-1}_x}+
C\big\|f(u)\big\|_{[W]^\ast(I)\oplus L_t^1(I;
B^{s_c-1}_{2,1}(\R^d))},
\end{align}
where the time interval $I$ contains zero.

\begin{lemma}[Product rule \cite{CW,Taylor}]\label{moser}
Let $s\geq0$, and $1<r,p_j,q_j<+\infty$ be such that
$\frac1r=\frac1{p_i}+\frac1{q_i}~(i=1,2).$ Then, we have
$$\big\||\nabla|^s(fg)\big\|_{L_x^r(\R^d)}\lesssim\|f\|_{{L_x^{p_1}(\R^d)}}\big\||\nabla|^sg
\big\|_{{L_x^{q_1}(\R^d)}}+\big\||\nabla|^sf\big\|_{{L_x^{p_2}(\R^d)}}\|g\|_{{L_x^{q_2}(\R^d)}}.$$
\end{lemma}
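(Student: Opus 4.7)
The plan is to establish this fractional Leibniz rule via Littlewood--Paley theory and Bony's paraproduct decomposition. Using the dyadic operators $\Delta_j$ and $\mathcal{P}_0$ already introduced in the paper together with the low-frequency cutoffs $S_j := \mathcal{P}_0 + \sum_{1\le k\le j-1}\Delta_k$, I would split the product as
\[
fg \;=\; \pi_f(g) \;+\; \pi_g(f) \;+\; R(f,g),
\]
where $\pi_f(g) := \sum_j S_{j-1}f \cdot \Delta_j g$ captures the low--high frequency interactions, $\pi_g(f)$ the symmetric high--low ones, and $R(f,g) := \sum_{|j-k|\le 1}\Delta_j f \cdot \Delta_k g$ the high--high remainder. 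The objective is then to bound each of the three pieces in $L^r$ after applying $|\nabla|^s$, matching one of the two terms on the right-hand side.

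For the paraproduct $\pi_f(g)$, each block $S_{j-1}f \cdot \Delta_j g$ has Fourier support in a dyadic annulus of size $\sim 2^j$ because $\mathrm{supp}\,\widehat{S_{j-1}f} \subset \{|\xi|\lesssim 2^{j-2}\}$ while $\Delta_j g$ lives at frequencies $\sim 2^j$. Consequently $|\nabla|^s$ acts essentially as multiplication by $2^{js}$ on each block, so the derivative may be moved onto the $\Delta_j g$ factor up to bounded Fourier multipliers. The standard Littlewood--Paley square-function characterization of $L^r$ (valid since $1<r<\infty$), combined with a vector-valued H\"older inequality and the maximal-function bound $\bigl\|\sup_j |S_{j-1}f|\bigr\|_{L^{p_1}} \lesssim \|f\|_{L^{p_1}}$, then yields the bound $\|f\|_{L^{p_1}}\,\bigl\||\nabla|^s g\bigr\|_{L^{q_1}}$. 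The paraproduct $\pi_g(f)$ is handled identically and produces the second term of the right-hand side.

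The main obstacle is the high--high remainder $R(f,g)$, where the frequencies of $\Delta_j f$ and $\Delta_k g$ are comparable and the product is supported only in a ball of radius $\sim 2^j$ rather than a single annulus, so $|\nabla|^s$ cannot simply be absorbed block by block. Here I would decompose $\Delta_\ell R(f,g)$, which is nonzero only for $\ell \le j+C$, extract the $s$-derivative through a Bernstein inequality on each $\Delta_\ell$-piece (gaining a factor $2^{\ell s}$ that sums as a geometric series precisely because $s\ge 0$), and then sum in $j$ using Cauchy--Schwarz together with the square-function estimate. This bounds the high--high piece by either term on the right-hand side, completing the proof. The restriction $1<r<\infty$ (and the analogous restrictions on $p_i,q_i$) is essential throughout in order to invoke both the Littlewood--Paley equivalence and the boundedness of the Hardy--Littlewood maximal operator; the endpoint $r=1$ would require Hardy-space substitutes that are not needed for our applications.
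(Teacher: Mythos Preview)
Your paraproduct argument is correct and is the standard route to the fractional Leibniz rule. Note, however, that the paper does not supply its own proof of this lemma: it is simply quoted from the references \cite{CW,Taylor}, so there is no ``paper's proof'' to compare against. Your sketch is essentially how the cited works (and the subsequent literature on Kato--Ponce type inequalities) establish the estimate, and the key points you identify---frequency localization of each paraproduct block, the maximal-function control of $\sup_j|S_{j-1}f|$, and the geometric summation in the high--high remainder that relies on $s\ge 0$---are exactly the ingredients needed. Nothing is missing.
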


As a direct consequence, we have the following nonlinear estimate.

\begin{lemma}[Nonlinear estimate]\label{nlest}
Let $I$ be a time slab, one has
\begin{align}\nonumber
&\big\|u^2v\big\|_{[W]^\ast(I)}\\\label{nle}
\lesssim&\big\|u\big\|_{[W](I)}^{1+\frac{2}{d-1}}\big\|u\big\|_{L_t^\infty\dot{H}^{s_c}_x}^\frac{d-3}{d-1}
\big\|v\big\|_{[W](I)}^{\frac{2}{d-1}}\big\|v\big\|_{L_t^\infty\dot{H}^{s_c}_x}^\frac{d-3}{d-1}+
\big\|v\big\|_{[W](I)}\big\|u\big\|_{[W](I)}^{\frac{4}{d-1}}\big\|u\big\|_{L_t^\infty\dot{H}^{s_c}_x}^\frac{2(d-3)}{d-1}.
\end{align}
\end{lemma}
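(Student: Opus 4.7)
The plan is to distribute the $s=(d-3)/2$ derivatives in $u^{2}v$ via the Besov--Leibniz rule, balance the spatial integrability using interpolation between the embeddings $\dot H^{s_c}\hookrightarrow L^{d}$ and $B^{s}_{q,2}\hookrightarrow L^{p_\star}$ with $q=2(d+1)/(d-1)$ and $p_\star=2d(d+1)/(d+3)$, and finally apply H\"older in time. Note $q'=2(d+1)/(d+3)$, so the target is to control $\|u^{2}v\|_{L_t^{q'}B^{s}_{q',2}}$. Applying the Besov-space version of Lemma~\ref{moser} (iterated twice to handle the three factors), one reduces the pointwise-in-time estimate of $\bigl\||\nabla|^{s}(u^{2}v)\bigr\|_{L_x^{q'}}$ to two types of trilinear terms: either $|\nabla|^{s}$ falls on $v$, or on one of the two $u$-factors.

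In the first configuration, H\"older in $x$ with the relation $2/(d+1)+1/q=1/q'$ gives
\[
\bigl\|u^{2}\,|\nabla|^{s}v\bigr\|_{L_x^{q'}}\lesssim \|u\|_{L_x^{d+1}}^{2}\,\bigl\||\nabla|^{s}v\bigr\|_{L_x^{q}}.
\]
The key step is the interpolation
\[
\|u\|_{L_x^{d+1}}\lesssim \|u\|_{\dot H^{s_c}}^{(d-3)/(d-1)}\|u\|_{B^{s}_{q,2}}^{2/(d-1)},
\]
obtained from the two embeddings above with the interpolation parameter $\theta=(d-3)/(d-1)$ (a short algebraic computation). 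Substituting, pulling the $\dot H^{s_c}$ norm out as $L_t^\infty$, and applying H\"older in time via the balanced identity $(4/(d-1)+1)/q=1/q'$, I recover the second term of \eqref{nle}.

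In the second configuration, H\"older in $x$ yields
\[
\bigl\|u\,|\nabla|^{s}u\,v\bigr\|_{L_x^{q'}}\lesssim \bigl\||\nabla|^{s}u\bigr\|_{L_x^{q}}\|u\|_{L_x^{d+1}}\|v\|_{L_x^{d+1}},
\]
and the same $L^{d+1}$ interpolation applied symmetrically to both $u$ and $v$, followed by H\"older in time with the $[W]$-exponents $(1+2/(d-1))$ on $u$ and $2/(d-1)$ on $v$, reproduces the first term of \eqref{nle} with the predicted powers $(d-3)/(d-1)$ in $L_t^\infty\dot H^{s_c}$ on both sides. The one step that needs care is the three-factor fractional Leibniz rule at the Besov level $B^{s}_{q',2}$; I would carry it out by a standard Littlewood--Paley decomposition of $u^{2}v$ into high-high and high-low paraproduct interactions, recovering the $\ell^{2}_{j}$-summability from the Cauchy--Schwarz step so that exactly one factor carries the $s$-derivative while the other two remain in Lebesgue spaces. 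Granted that decomposition, the rest of the proof is exponent bookkeeping along the lines above.
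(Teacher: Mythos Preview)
Your approach is essentially the same as the paper's: apply the fractional product rule to split $u^2v$ into the two configurations you describe, place the undifferentiated factors in $L_x^{d+1}$, and then interpolate $L^{d+1}$ between $\dot H^{s_c}\hookrightarrow L^d$ and $B^{(d-3)/2}_{q,2}\hookrightarrow L^{2d(d+1)/(d+3)}$ with weight $\theta=(d-3)/(d-1)$, exactly as you do. The only cosmetic difference is that the paper packages the product rule and the time H\"older in one step, writing the intermediate bound as $\|u\|_{[W]}\|u\|_{L_{t,x}^{d+1}}\|v\|_{L_{t,x}^{d+1}}+\|v\|_{[W]}\|u\|_{L_{t,x}^{d+1}}^{2}$ and then interpolating the $L_{t,x}^{d+1}$ norm directly; also, rather than carrying out the Besov-level Leibniz rule by hand via paraproducts, the paper transfers to the Sobolev scale using $W^{s,q'}\hookrightarrow B^{s}_{q',2}$ and $B^{s}_{q,2}\hookrightarrow W^{s,q}$ (valid since $q'<2<q$) and applies Lemma~\ref{moser} there.
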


\vskip 0.2cm
\begin{proof}
It follows from the above product rule and Sobolev
embedding: $B^s_{p,\min\{p,2\}}(\R^d)\subset
F^s_{p,2}(\R^d)=W^{s,p}_x(\R^d)\subset B^s_{p,\max\{p,2\}}(\R^d)$
that
\begin{align}\label{nle1}
\big\|u^2v\big\|_{[W]^\ast(I)}
\lesssim\big\|u\big\|_{[W](I)}\big\|u\big\|_{L_{t,x}^{d+1}}\big\|v\big\|_{L_{t,x}^{d+1}}+\big\|v\big\|_{[W](I)}\big\|u\big\|_{L_{t,x}^{d+1}}^2.
\end{align}

Using H\"older's inequality and Sobolev embedding, we obtain
\begin{align*}
\big\|u\big\|_{L_{t,x}^{d+1}}\lesssim&\big\|u\big\|_{L_t^\frac{2(d+1)}{d-1}L_x^\frac{2d(d+1)}{d+3}}^\frac{2}{d-1}\big\|u\big\|_{L_t^\infty
L_x^d}^\frac{d-3}{d-1}\\
\lesssim&\big\|u\big\|_{[W](I)}^{\frac{2}{d-1}}\big\|u\big\|_{L_t^\infty\dot{H}^{s_c}}^\frac{d-3}{d-1}.
\end{align*}
Plugging this into \eqref{nle1}, we get \eqref{nle}.
\end{proof}

We can now state the local well-posedness for $(\ref{equ1})$ with
large initial data and small data scattering in the space
$H^{s_c}(\R^d)\times H^{s_c-1}(\R^d)$, which is the first step to
obtain the global time-space estimate and then
 lead to the scattering.

\begin{theorem}[Local wellposedness]\label{small}
Assume $(u_0,u_1)\in H^{s_c}_x(\mathbb{R}^d)\times
H^{s_c-1}_x(\mathbb{R}^d)$. There exists a small constant
$\delta=\delta(E)$ such that if $\|(u_0,u_1)\|_{ H^{s_c}\times
H^{s_c-1}}\leq E$ and $I$ is an time interval containing zero such
that
\begin{equation}\label{jiashe}
\big\|\dot{K}(t)u_0 + K(t)u_1\big\|_{[W](I)}\leq
\delta,\end{equation} then there exists a unique strong solution $u$
to \eqref{equ1} in $I\times \mathbb{R}^d$, with $u\in
C(I;H^{s_c}_x(\R^d))\cap C^1(I;H^{s_c-1}_x(\R^d))$ and
\begin{equation}\label{smalll}
\|u\|_{[W](I)}\leq 2\delta,~\|(u,u_t)\|_{L_t^\infty(I; H^{s_c}\times
H^{s_c-1})}\leq 2CE,
\end{equation}
where $C$ is the Strichartz constant as in Lemma \ref{lem22}.

In particular, if  $\|(u_0,u_1)\|_{ H^{s_c}\times
H^{s_c-1}}\leq\delta$, then the solution $u$ is global and scatters.
\end{theorem}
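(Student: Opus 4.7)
The plan is to construct the solution by a Banach contraction argument applied to the Duhamel map
$$\Phi(u)(t) := \dot K(t) u_0 + K(t) u_1 - \int_0^t K(t-s)\, f(u(s))\, ds,$$
on the complete metric space
$$X := \Big\{u \in [W](I):\ \|u\|_{[W](I)} \le 2\delta,\ \ \|(u, u_t)\|_{L_t^\infty(I;\, H^{s_c}_x \times H^{s_c-1}_x)} \le 2CE\Big\},$$
equipped with the distance $d(u,v) = \|u-v\|_{[W](I)}$. Here $C$ is the Strichartz constant from Lemma \ref{lem22}, and $\delta = \delta(E)$ is to be chosen below.

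The first step is to check that $\Phi$ maps $X$ into itself. Applying the Strichartz estimate \eqref{str} to $\Phi(u)$ gives
\begin{align*}
\|\Phi(u)\|_{[W](I)} &+ \|(\Phi(u), \partial_t \Phi(u))\|_{L_t^\infty(H^{s_c} \times H^{s_c-1})} \\
&\le \|\dot K(t) u_0 + K(t) u_1\|_{[W](I)} + C\|(u_0, u_1)\|_{H^{s_c} \times H^{s_c-1}} + C\|f(u)\|_{[W]^*(I) \oplus L_t^1(I; B^{s_c-1}_{2,1})}.
\end{align*}
The assumption \eqref{jiashe} and the bound on $(u_0,u_1)$ control the first two terms by $\delta + CE$. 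For the nonlinearity, Lemma \ref{nlest} (taking $v = u$) bounds the $[W]^*(I)$ part by
$$\|u\|_{[W](I)}^{1 + \frac{4}{d-1}}\, \|u\|_{L_t^\infty \dot H^{s_c}}^{\frac{2(d-3)}{d-1}} \lesssim \delta^{1 + \frac{4}{d-1}}\, E^{\frac{2(d-3)}{d-1}},$$
which is strictly superlinear in $\delta$; the complementary low-frequency portion, handled by $L_t^1 B^{s_c-1}_{2,1}$ via the Klein-Gordon dispersive estimate of Lemma \ref{lem21}, admits an analogous bound. Choosing $\delta = \delta(E)$ so small that $C\, \delta^{4/(d-1)} E^{2(d-3)/(d-1)} \le 1$ yields $\|\Phi(u)\|_{[W](I)} \le 2\delta$ and the matching energy-type bound, hence $\Phi(X) \subset X$.

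Next, the contraction property is obtained by expanding $f(u) - f(v) = (u^2 + uv + v^2)(u-v)$ and applying the same trilinear estimate with one of the factors taken to be $u-v$; the smallness of $\delta$ forces the contraction constant to be at most $1/2$ on $(X,d)$. Banach's fixed point theorem then produces a unique solution $u \in X$, and time continuity $(u,u_t) \in C(I; H^{s_c} \times H^{s_c-1})$ follows from the Strichartz bounds combined with the standard continuity of the propagator $V_0(t)$ on a dense subspace. For the small-data statement, the hypothesis $\|(u_0,u_1)\|_{H^{s_c} \times H^{s_c-1}} \le \delta$ and Strichartz on the free flow give $\|\dot K(t) u_0 + K(t) u_1\|_{[W](\mathbb R)} \le C\delta$, so (after shrinking $\delta$) the argument applies with $I = \mathbb R$; the resulting $\|u\|_{[W](\mathbb R)} < \infty$ together with the Duhamel formula shows that $V_0(-t)(u(t),u_t(t))$ is Cauchy in $H^{s_c} \times H^{s_c-1}$ as $t \to \pm\infty$, producing the scattering states of \eqref{1.2}.

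The main technical point I expect to be delicate is respecting the splitting $[W]^*(I) \oplus L_t^1 B^{s_c-1}_{2,1}$ of the Strichartz dual space that is dictated by the two Klein-Gordon dispersive regimes (decay rate $(d-1-\theta)/2$ for high frequencies versus $(d-1+\theta)/2$ for low frequencies in Lemma \ref{lem21}). Lemma \ref{nlest} addresses the high-frequency piece through the $[W]^*$ norm, and the low-frequency complement must be estimated by exploiting the mass term, which supplies additional $L^1_t$ integrability at frequencies $\lesssim 1$. Once both pieces close with a strictly positive power of $\delta$, the choice of $\delta$ depends on $E$ alone and the contraction holds uniformly over $I$.
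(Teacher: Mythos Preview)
Your overall strategy---Banach contraction on the Duhamel map---matches the paper's, but there are two concrete issues in the execution.

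First, your discussion of the ``low-frequency complement'' and the $L_t^1(I;B^{s_c-1}_{2,1})$ piece reflects a misreading of the $\oplus$ in \eqref{str}. That symbol denotes a sum space: the norm on the right is the infimum over all decompositions of the forcing into a $[W]^*$ part and an $L_t^1 B^{s_c-1}_{2,1}$ part. In particular you may take the latter to be zero, and the paper does exactly this---the cubic nonlinearity is estimated \emph{entirely} in $[W]^*(I)$ via Lemma~\ref{nlest}, with no separate low-frequency analysis. (The $L_t^1 B^{s_c-1}_{2,1}$ component is used elsewhere in the paper, namely for the linear error $(\langle\nabla\rangle - |\nabla|)\vec u^j_{(n)}$ in the profile argument, but not here.) So the paragraph you flag as ``the main technical point'' is not actually needed.

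Second, your choice of metric $d(u,v)=\|u-v\|_{[W](I)}$ does not close the contraction with the available trilinear estimate. Writing $u^3-v^3=(u-v)(u^2+uv+v^2)$ and applying Lemma~\ref{nlest} produces, among others, a term of size
\[
\|w\|_{[W]}^{1+\frac{2}{d-1}}\,\|w\|_{L_t^\infty \dot H^{s_c}}^{\frac{d-3}{d-1}}\,\|u-v\|_{[W]}^{\frac{2}{d-1}}\,\|u-v\|_{L_t^\infty \dot H^{s_c}}^{\frac{d-3}{d-1}},
\]
with $w\in\{u,v\}$. If you only track $\|u-v\|_{[W]}$ in the metric and bound $\|u-v\|_{L_t^\infty \dot H^{s_c}}\le 4CE$ from the ball, you are left with $\|u-v\|_{[W]}^{2/(d-1)}$, which is \emph{sublinear} in the distance for $d\ge 5$ and therefore cannot yield a contraction. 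The paper avoids this by taking the metric to be $\|u-v\|_{[W](I)\cap L_t^\infty H^{s_c}_x}$, so that every occurrence of $u-v$ in the trilinear bound is controlled by the distance with total exponent at least one. With that adjustment (and dropping the superfluous low-frequency discussion), your argument coincides with the paper's.
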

\begin{proof}
We apply the Banach fixed point argument to prove this lemma. First
we define the solution map
\begin{equation}\label{inte3}
\Phi(u(t))=\dot{K}(t)u_0 + K(t)u_1-\int^{t}_{0}K(t-s)f(u(s))ds
\end{equation}
on the complete metric space $B$
\begin{align*}
B=\big\{u\in C(I;H^{s_c}): \big\|(u,u_t)\big\|_{L_t^\infty(I;
H^{s_c}\times H^{s_c-1})}\leq 2CE, \|u\|_{[W](I)}\leq2\delta\big\}
\end{align*}
with the metric $d(u,v)=\big\|u-v\big\|_{[W](I)\cap L_t^\infty
H^{s_c}_x}$.

It suffices to prove that the operator defined by the RHS of
$(\ref{inte3})$ is a contraction map on $B$ for $I$. If $u\in B,$
then by Strichartz estimate \eqref{str}, \eqref{nle} and
\eqref{jiashe}, we have
\begin{align*}
\big\|\Phi(u)\big\|_{[W](I)}\leq&\big\|\dot{K}(t)u_0 +
K(t)u_1\big\|_{[W](I)}+C\big\|u^3\big\|_{[W]^\ast(I)}\\
\leq&\delta+C\big\|u\big\|_{[W](I)}^{1+\frac{4}{d-1}}\big\|u\big\|_{L_t^\infty\dot{H}^{s_c}}^\frac{2(d-3)}{d-1}.
\end{align*}
Plugging the assumption $\|u\|_{L^\infty(I;H^{s_c})}\leq 2CE$ and
$\|u\|_{[W](I)}\leq2\delta$, we see that for $u\in B$,
\begin{align*}
\big\|\Phi(u)\big\|_{[W](I)}
\leq&\delta+C(2\delta)^{1+\frac{4}{d-1}}(2CE)^\frac{2(d-3)}{d-1}.
\end{align*}
Thus we can choose $\delta$ small depending on $E$ and the
Strichartz constant $C$ such that
$$\big\|\Phi(u)\big\|_{[W](I)}\leq2\delta.$$
Similarly, if $u\in B,$ then
$\big\|(\Phi(u),\pa_t\Phi(u))\big\|_{L_t^\infty(I; H^{s_c}\times
H^{s_c-1})}\leq 2CE.$ Hence $\Phi(u)\in B$ for $u\in B.$ That is,
the functional $\Phi$
 maps the set $B$ back to itself.

On the other hand, by a same argument as before and Lemma
\ref{nlest}, we have for $u, v\in B$,
\begin{align*}
d(\Phi(u),\Phi(v))\lesssim&C\|u^3-v^3\|_{[W]^\ast(I)}\\
\leq&16C\|u-v\|_{[W](I)\cap L_t^\infty H^
{s_c}}\|(u,v)\|_{[W](I)}^{\frac2{d-1}}\big\|(u,v)\big\|_{[W](I)\cap
L_t^\infty H^ {s_c}}^\frac{2(d-2)}{d-1}\\
\leq&16C(4\delta)^\frac2{d-1}(4CE+2\delta)^\frac{2(d-2)}{d-1}d(u,v)
\end{align*}
which allows us to derive
\begin{equation*}
d(\Phi(u),\Phi(v))\leq\frac{1}{2}d(u,v),
\end{equation*}
by taking $\delta$ small such that
$$16C(4\delta)^\frac2{d-1}(4CE+2\delta)^\frac{2(d-2)}{d-1}\leq\frac12.$$

A standard fixed point argument gives a unique solution $u$ of
\eqref{equ1} on $I\times\R^d$ which satisfies the bound
\eqref{smalll}.
\end{proof}
Using Theorem \ref{small} as well as its proof, one easily derives the
following local theory for \eqref{equ1}. We omit the standard detail
here.
\begin{theorem}\label{lwp}
Assume that $d\geq5$, $s_c=\frac{d}2-1.$ Then, given  $(u_0,u_1)\in
H^{s_c}(\R^d)\times H^{s_c-1}(\R^d)$ and $t_0\in\R$, there exists a
unique maximal-lifespan solution $u: I\times\R^d\to\R$ to
\eqref{equ1} with initial data
$\big(u(t_0),u_t(t_0)\big)=\big(u_0,u_1\big)$. This solution also
has the following properties:
\begin{enumerate}
\item (Local existence) $I$ is an open neighborhood of $t_0$.
\item (Blowup criterion) If\ $\sup (I)$ is finite, then $u$ blows up
forward in time (in the sense of Definition \ref{def1.2}). If $\inf
(I)$ is finite, then $u$ blows up backward in time.
\item (Scattering) If $\sup (I)=+\infty$ and $u$ does not blow up
forward in time, then $u$ scatters forward in time in the sense
\eqref{1.2}. Conversely, given $(v_+,\dot{v}_+)\in
H^{s_c}(\R^d)\times H^{s_c-1}(\R^d)$ there is a unique solution to
\eqref{equ1} in a neighborhood of infinity so that \eqref{1.2}
holds.
\end{enumerate}
\end{theorem}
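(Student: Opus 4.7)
The plan is to derive all three parts of the theorem by bootstrapping from the small-data result Theorem \ref{small} together with Strichartz/Duhamel manipulations. For \emph{local existence}, by \eqref{str1} the free flow $t\mapsto \dot K(t)u_0+K(t)u_1$ lies in $[W](\R)$, hence by absolute continuity of the Bochner integral its $[W]$-norm on $[t_0-\varepsilon,t_0+\varepsilon]$ tends to $0$ as $\varepsilon\to 0^+$. Choose $\varepsilon$ so this norm is at most $\delta(E)$ with $E=\|(u_0,u_1)\|_{H^{s_c}\times H^{s_c-1}}$, and apply Theorem \ref{small} to produce a strong solution on $[t_0-\varepsilon,t_0+\varepsilon]$. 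Uniqueness follows from the contraction argument in the proof of Theorem \ref{small} applied locally, and a standard Zorn-type union of compatible local solutions then yields the maximal-lifespan solution on an open interval $I\ni t_0$.

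For the \emph{blowup criterion}, argue by contradiction: suppose $T:=\sup I<\infty$ but $S_{[t_0,T)}(u)<\infty$. Partitioning $[t_0,T)$ into finitely many subintervals on each of which $\|u\|_{[W]}$ is sufficiently small, one closes a bootstrap via \eqref{str} and Lemma \ref{nlest} to obtain $\|(u,u_t)\|_{L_t^\infty([t_0,T);H^{s_c}\times H^{s_c-1})}<\infty$. Then for any sequence $t_n\uparrow T$ (say $t_n<t_m$), the Duhamel formula yields
\begin{equation*}
\big\|(u(t_n)-u(t_m),\,u_t(t_n)-u_t(t_m))\big\|_{H^{s_c}\times H^{s_c-1}} \lesssim \|f(u)\|_{[W]^*([t_n,t_m])},
\end{equation*}
which tends to $0$ as $n,m\to\infty$ by Lemma \ref{nlest} and absolute continuity of $\|u\|_{[W]}$. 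Hence $(u(t_n),u_t(t_n))$ converges in $H^{s_c}\times H^{s_c-1}$ to some $(u(T),u_t(T))$; applying local existence at time $T$ with this datum extends $u$ strictly past $T$, contradicting the maximality of $I$. The backward statement is symmetric.

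For \emph{forward scattering}, assume $\sup I=+\infty$ and $\|u\|_{[W]([t_0,\infty))}<\infty$. The same Cauchy argument shows that the scattering data
\begin{equation*}
{v_+(0)\choose \dot v_+(0)} := V_0(-t_0){u(t_0)\choose u_t(t_0)} - \int_{t_0}^\infty V_0(-s){0\choose f(u(s))}\,ds
\end{equation*}
is well defined in $H^{s_c}(\R^d)\times H^{s_c-1}(\R^d)$, and letting $v_+$ be the corresponding free solution yields \eqref{1.2}. Conversely, given $(v_+(0),\dot v_+(0))\in H^{s_c}\times H^{s_c-1}$, Strichartz gives $\|v_+\|_{[W](\R)}<\infty$, so $\|v_+\|_{[W]([T,\infty))}<\delta$ for $T$ large; a contraction-mapping argument on
\begin{equation*}
u(t)=v_+(t)-\int_t^\infty K(t-s)f(u(s))\,ds
\end{equation*}
over $[T,\infty)$, essentially identical to the fixed-point scheme in the proof of Theorem \ref{small}, produces the unique solution there, which then extends to a neighborhood of $+\infty$ by the local theory.

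The only step demanding more than a direct appeal to Theorem \ref{small} is the bootstrap in the blowup criterion from finiteness of $\|u\|_{[W]}$ to boundedness of $\|(u,u_t)\|_{L_t^\infty H^{s_c}\times H^{s_c-1}}$: since \eqref{str} couples these two norms through the cubic nonlinearity controlled by Lemma \ref{nlest}, one must partition the interval into pieces where $\|u\|_{[W]}$ is so small that the $\|f(u)\|_{[W]^*}$ contribution can be absorbed into the left-hand side. This is the only substantive technical point; the remainder of the argument reduces to contraction estimates essentially identical to those already present in the proof of Theorem \ref{small}.
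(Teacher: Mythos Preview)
Your proposal is correct and is precisely the ``standard detail'' the paper explicitly omits, saying only that the result follows from Theorem~\ref{small} and its proof; your outline carries this out in the expected way via Strichartz, Lemma~\ref{nlest}, and contraction mapping.

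One minor imprecision: the displayed Cauchy estimate in the blowup-criterion step,
\[
\big\|(u(t_n)-u(t_m),\,u_t(t_n)-u_t(t_m))\big\|_{H^{s_c}\times H^{s_c-1}} \lesssim \|f(u)\|_{[W]^*([t_n,t_m])},
\]
is not literally true as written (take $f\equiv 0$), since the free-flow contribution $[V_0(t_n-t_m)-\mathrm{Id}](u(t_m),u_t(t_m))$ is missing. The fix is immediate: either apply the estimate instead to the pulled-back data $V_0(-t)(u(t),u_t(t))$, for which Duhamel gives exactly your inequality, and then use that $V_0(t_n)\to V_0(T)$ strongly; or simply note that the omitted linear term also tends to zero by strong continuity of $t\mapsto V_0(t)$ together with the bound $\|(u,u_t)\|_{L_t^\infty H^{s_c}\times H^{s_c-1}}<\infty$ you already established. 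With this correction the argument goes through unchanged.
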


\subsection{Perturbation lemma}
In this part, we give the perturbation theory of the solution of
\eqref{equ1} with the global space-time estimate.

With any real-valued function $u(t,x)$, we associate the
complex-valued function $\vec{u}(t,x)$ by
\begin{equation}
\vec{u}=\langle\nabla\rangle^{s_c-1}\big(\langle\nabla\rangle
u-i\dot{u}\big),\quad u=\Re\langle\nabla\rangle^{-s_c}\vec{u},
\end{equation}
where $\Re z$ denotes the real part of $z\in\mathbb{C}$. Then the
free and nonlinear Klein-Gordon equations are given by
\begin{align}
\begin{cases}
(\Box+1)u=0\Longleftrightarrow(i\partial_t+\langle\nabla\rangle)\vec{u}=0,\\
(\Box+1)u=-f(u)\Longleftrightarrow(i\partial_t+\langle\nabla\rangle)\vec{u}=-\langle\nabla\rangle^{s_c-1}f(\langle\nabla\rangle^{-s_c}\Re\vec{u}),
\end{cases}
\end{align}

\begin{lemma}\label{long}  Let $I$ be a
time interval, $t_0\in I$ and $\vec{u},\vec{w}\in C(I;L^2(\R^d))$
satisfy
\begin{align*}
(i\partial_t+\langle\nabla\rangle)\vec{u}=&-\langle\nabla\rangle^{s_c-1}\big[f(u)+eq(u)\big]\\
(i\partial_t+\langle\nabla\rangle)\vec{w}=&-\langle\nabla\rangle^{s_c-1}\big[f(w)+eq(w)\big]
\end{align*}
for some function $eq(u),eq(w)$. Assume that for some constants
$M,E>0$, we have
\begin{align}\label{eq2.20}
\big\|w\big\|_{ST(I)}\leq M,\\\label{equ2.201}
\big\|\vec{u}\big\|_{L_t^\infty L^2_x(I\times
\R^d)}+\big\|\vec{w}\big\|_{L_t^\infty L^2_x(I\times \R^d)}\leq E,
\end{align}
 Let $t_0\in I$, and let $(u(t_0),u_t(t_0))$ be
close to $(w(t_0),w_t(t_0))$ in the sense that
\begin{equation} \label{eq2.22}
\big\|\gamma_0\big\|_{ST(I)}\leq\epsilon,
\end{equation}
where
$\vec{\gamma}_0=e^{i\langle\nabla\rangle(t-t_0)}(\vec{u}-\vec{w})(t_0)$
and $0<\epsilon<\epsilon_1=\epsilon_1( M, E)$ is a small constant.
Assume also that we have smallness conditions
\begin{equation}\label{equ2.21}
\big\|(eq(u),eq(w))\big\|_{ST^*(I)}\leq\epsilon,
\end{equation}
where $\epsilon$ is as above and
$$ST^*(I)=[W]^\ast(I)\oplus L_t^1(I;B_{2,2}^{s_c-1}(\R^d)).$$ Then we
conclude that
\begin{equation}\label{eq2.23}
\begin{aligned}
\big\|u-w\big\|_{ST(I)}\leq & C(M,E)\epsilon,\\
\big\|u\big\|_{ST(I)}\leq & C(M,E).
\end{aligned}
\end{equation}
\end{lemma}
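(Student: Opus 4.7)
The plan is a standard perturbative bootstrap on a partition of $I$ into finitely many subintervals on which $w$ has small scattering norm. I would begin by setting $\gamma:=u-w$, which solves
\begin{equation*}
(i\partial_t+\langle\nabla\rangle)\vec{\gamma}
=-\langle\nabla\rangle^{s_c-1}\bigl[f(w+\gamma)-f(w)+eq(u)-eq(w)\bigr],
\end{equation*}
and record the Strichartz consequence (from Lemma \ref{lem22}): for any subinterval $J\subset I$ with left endpoint $t_j$,
\begin{equation*}
\|\gamma\|_{ST(J)}
\lesssim \bigl\|e^{i\langle\nabla\rangle(t-t_j)}\vec{\gamma}(t_j)\bigr\|_{ST(J)}
+\bigl\|f(w+\gamma)-f(w)\bigr\|_{[W]^{*}(J)}
+\epsilon.
\end{equation*}

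Next I would partition $I=\bigcup_{j=1}^{N} I_j$ into consecutive subintervals with $\|w\|_{ST(I_j)}\leq \eta$, where $\eta=\eta(E)$ is a threshold to be chosen and $N=N(M,\eta)$. Expanding $f(w+\gamma)-f(w)=3w^2\gamma+3w\gamma^2+\gamma^3$ and applying the trilinear bound from Lemma \ref{nlest}, together with the uniform control $\|\vec{u}\|_{L_t^\infty L_x^2}+\|\vec{w}\|_{L_t^\infty L_x^2}\leq E$ which dominates the $L_t^\infty\dot H^{s_c}_x$ norms appearing there, one obtains on each $I_j$ an estimate of the schematic form
\begin{equation*}
\|f(w+\gamma)-f(w)\|_{[W]^{*}(I_j)}
\leq C(E)\bigl(\eta^{\frac{4}{d-1}}+\|\gamma\|_{ST(I_j)}^{\frac{4}{d-1}}\bigr)\|\gamma\|_{ST(I_j)}.
\end{equation*}
Fixing $\eta$ so that $C(E)\eta^{4/(d-1)}\leq \tfrac12$ and running a continuity bootstrap on $I_j$ (the quantity $t\mapsto\|\gamma\|_{ST([t_j,t])}$ is continuous and vanishes at $t=t_j$ by the local theory in Theorem \ref{lwp}) gives
\begin{equation*}
\|\gamma\|_{ST(I_j)}\leq C(E)\bigl\|e^{i\langle\nabla\rangle(t-t_j)}\vec{\gamma}(t_j)\bigr\|_{ST(I_j)}+C(E)\epsilon.
\end{equation*}
A second Strichartz application then converts this into a bound on $\|\vec{\gamma}(t_{j+1})\|_{L^2_x}$, so that the free-evolution data opening the next subinterval grow by at most a multiplicative constant $K=K(E)$.

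Iterating across the $N$ subintervals yields $\|\gamma\|_{ST(I)}\leq K^{N}\epsilon\leq C(M,E)\epsilon$, from which the second bound $\|u\|_{ST(I)}\leq M+C(M,E)\epsilon\leq C(M,E)$ is immediate. The smallness threshold $\epsilon_1=\epsilon_1(M,E)$ is fixed at the outset so that $K^{N}\epsilon_1$ stays below the bootstrap radius at every stage, which is possible because $N$ depends only on $M$ and $\eta(E)$. The main obstacle is the usual one in energy-supercritical perturbation theory: no global scattering bound is available for $u$ a priori, so the nonlinear difference cannot be treated as a small perturbation in one shot; this forces the stage-by-stage continuity argument and the exponential-in-$N$ growth of the final constant. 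The crucial structural input that makes the iteration close is the strict positivity of the exponent $4/(d-1)$ in Lemma \ref{nlest}, which allows the factor $C(E)\eta^{4/(d-1)}$ to be made small by shrinking $\eta$.
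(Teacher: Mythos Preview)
Your proposal is correct and follows essentially the same approach as the paper: partition $I$ into subintervals on which $\|w\|_{ST}$ is small, use Lemma~\ref{nlest} and a continuity bootstrap to control $\gamma=u-w$ on each piece, and iterate with an exponential-in-$N$ constant. The only cosmetic difference is that the paper threads the iteration by propagating the free evolutions $\vec{\gamma}_j=e^{i\langle\nabla\rangle(t-t_j)}\vec{\gamma}(t_j)$ directly in the $ST$-norm on the remaining interval, rather than converting to an $L^2_x$ bound on $\vec{\gamma}(t_{j+1})$ at each step as you do; both routes are standard and equivalent.
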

\begin{proof}
Since $\|w\|_{ST(I)}\leq M$, there exists a partition of the right
half of $I$ at $t_0$:
$$t_0<t_1<\cdots<t_N,~I_j=(t_j,t_{j+1}),~I\cap(t_0,\infty)=(t_0,t_N),$$
such that $N\leq C(L,\delta)$ and for any $j=0,1,\cdots,N-1,$ we
have
\begin{equation}\label{ome}
\|w\|_{ST(I_j)}\leq\delta\ll1.
\end{equation}
The estimate on the left half of $I$ at $t_0$ is analogue, we omit
it.

Let
\begin{equation}
\gamma(t)=u(t)-w(t),~\vec{\gamma}_j(t)=e^{i\langle\nabla\rangle(t-t_j)}\vec
\gamma(t_j),~0\leq j\leq N-1,
\end{equation}
then $\vec\gamma$ satisfies the following difference equation
\begin{align*}
\begin{cases}
(i\partial_t+\langle\nabla\rangle)\vec{\gamma}=-\langle\nabla\rangle^{s_c-1}\Big(\big[\gamma(\gamma^2+3\gamma\omega+3\omega^2)\big]+eq(u)-eq(w)\Big)\\
\vec{\gamma}(t_j)=\vec{\gamma}_j(t_j),
\end{cases}
\end{align*}
which implies that
\begin{align*}
\vec{\gamma}(t)=&\vec{\gamma}_j(t)+i\int_{t_j}^te^{i\langle\nabla\rangle(t-s)}\Big\{\langle\nabla\rangle^{s_c-1}
\Big(\big[\gamma(\gamma^2+3\gamma\omega+3\omega^2)\big]+eq(u)-eq(w)\Big)\Big\}ds,\\
\vec{\gamma}_{j+1}(t)=&\vec{\gamma}_j(t)+i\int_{t_j}^{t_{j+1}}e^{i\langle\nabla\rangle(t-s)}\Big\{\langle\nabla\rangle^{s_c-1}
\Big(\big[\gamma(\gamma^2+3\gamma\omega+3\omega^2)\big]+eq(u)-eq(w)\Big)\Big\}ds.
\end{align*}
By Strichartz estimate \eqref{str} and nonlinear estimate
\eqref{nle}, we have
\begin{align}\label{equ3}
&\|\gamma-\gamma_j\|_{ST(I_j)}+\|\gamma_{j+1}-\gamma_j\|_{ST(\R)}\\\nonumber
\lesssim&\big\|\gamma^3+3\gamma^2\omega+3\gamma\omega^2\big\|_{[W]^\ast(I_j)}+\big\|(eq(u),eq(w))\big\|_{ST^\ast(I_j)}\\\nonumber
\lesssim
&\|\gamma\|_{ST(I_j)}^{1+\frac{4}{d-1}}\|\gamma\|_{L_t^\infty\dot{H}^{s_c}}^\frac{2(d-3)}{d-1}
+\|\omega\|_{ST(I_j)}\|\gamma\|_{ST(I_j)}^{\frac{4}{d-1}}\|\gamma\|_{L_t^\infty\dot{H}^{s_c}}^\frac{2(d-3)}{d-1}\\\nonumber
&+\|\gamma\|_{ST(I_j)}\|\omega\|_{ST(I_j)}^{\frac{4}{d-1}}\|\omega\|_{L_t^\infty\dot{H}^{s_c}}^\frac{2(d-3)}{d-1}
+\big\|(eq(u),eq(w))\big\|_{ST^\ast(I_j)}.
\end{align}
Therefore, assuming that
\begin{equation}\label{laodong}
\|\gamma\|_{ST(I_j)}\leq\delta\ll1,~\forall~j=0,1,\cdots,N-1,
\end{equation}
then by \eqref{ome} and \eqref{equ3}, we have
\begin{equation}
\|\gamma\|_{ST(I_j)}+\|\gamma_{j+1}\|_{ST(t_{j+1},t_N)}\leq
C\|\gamma_j\|_{ST(t_j,t_N)}+\epsilon,
\end{equation}
for some absolute constant $C>0$. By \eqref{eq2.22} and iteration on
$j$, we obtain
\begin{equation}
\|\gamma\|_{ST(I)}\leq (2C)^N\epsilon\leq\frac{\delta}2,
\end{equation}
provided we choose $\epsilon_1$ sufficiently small. Hence the
assumption \eqref{laodong} is justified by continuity in $t$ and
induction on $j$. Then repeating the estimate \eqref{equ3} once
again, we can get the ST-norm estimate on $\gamma$, which implies
the Strichartz estimates on $u$.
\end{proof}

\section{Profile decomposition}
\setcounter{section}{3}\setcounter{equation}{0}

In this section, we first recall the linear profile decomposition of
the sequence of $L_x^2$-bounded solutions of
$(i\pa_t+\langle\nabla\rangle)\vec{v}=0$ which was established in
\cite{IMN}. And then we show the nonlinear profile decomposition
which will be used to construct the critical element and obtain its
compactness properties in the next section.

 \subsection{Linear profile decomposition}
First, we give some notation. For any triple
$(t_n^j,x_n^j,h_n^j)\in\R\times\R^d\times(0,1]$ with arbitrary
suffix $n$ and $j$, let $\tau_n^j,~T_n^j$, and
$\langle\nabla\rangle_n^j$ respectively denote the scaled time
shift, the unitary and the self-adjoint operators in $L^2(\R^d)$,
defined by
\begin{equation}\label{equ6.1}
\tau_n^j=-\frac{t_n^j}{h_n^j},~T_n^j\varphi(x)=(h_n^j)^{-\frac{d}{2}}\varphi\Big(\frac{x-x_n^j}{h_n^j}\Big),~\langle\nabla\rangle_n^j=\sqrt{-\Delta
+(h_n^j)^2}.
\end{equation}

Now we can state the linear profile decomposition as follows
\begin{lemma}[Linear profile decomposition,
\cite{IMN}]\label{lem3.1} Let $\vec{v}_n(t)=e^{i\langle\nabla\rangle
t}\vec{v}_n(0)$ be a sequence of free Klein-Gordon solutions with
uniformly bounded $L^2_x(\R^d)$-norm. Then after replacing it with
some subsequence, there exist $K\in\{0,1,2\ldots,\infty\}$ and, for
each integer $j\in[0,K)$, $\varphi^j\in L^2(\mathbb{R}^d)$ and
$\{(t^j_n,
x^j_n,h_n^j)\}_{n\in\mathbb{N}}\subset\mathbb{R}\times\mathbb{R}^d\times(0,1]$
satisfying the following. Define $\vec{v}^j_n$ and
$\vec{\omega}^k_n$ for each $j<k\leq K$ by
\begin{equation}\label{equ3.1}
\vec{v}_n(t,x)=\sum\limits_{j=0}^{k-1}\vec{v}^j_n(t,x)+\vec{\omega}^k_n(t,x),\end{equation}
where \begin{equation}
\vec{v}^j_n(t,x)=e^{i\langle\nabla\rangle(t-t^j_n)}T_n^j\varphi^j(x)=T_n^j\Big(e^{i\langle\nabla\rangle_n^j\frac{t-t_n^j}{h_n^j}}\varphi^j\Big)
,
\end{equation}
then  we have
\begin{equation}\label{equ3.2}
\lim\limits_{k\rightarrow
K}\varlimsup\limits_{n\rightarrow\infty}\big\|\vec{\omega}^k_n\big\|_{L_t^\infty(\mathbb{R};B^{-\frac{d}{2}}_{\infty,\infty}(
\mathbb{R}^d))}=0,
\end{equation}
and for  any $l<j<k\leq K$ and any $t\in \mathbb{R}$,
\begin{align}\label{equ3.3}
\lim\limits_{n\rightarrow\infty}\big\langle\mu\vec{v}^l_n,
\mu\vec{v}^j_n\big\rangle_{L^2_x}^2=0=
\lim\limits_{n\rightarrow\infty}\big\langle\mu\vec{v}^j_n,
\mu\vec{\omega}^k_n\big\rangle_{L^2_x}^2,
\\\label{equ3.4}
\lim\limits_{n\rightarrow\infty}\bigg\{\Big|\frac{h_n^l}{h_n^j}\Big|+\Big|\frac{h_n^j}{h_n^l}\Big|+\frac{|t_n^j-t_n^k|+|x_n^j-x_n^k|}{h_n^l}\bigg\}
=+\infty,
\end{align}
where $\mu\in\mathcal{MC}$ and $\mathcal{MC}$ is defined to be
$$\mathcal{MC}=\Big\{\mu=\mathcal{F}^{-1}\tilde{\mu}\mathcal{F}|\ \tilde{\mu}\in
C(\mathbb{R}^d),\exists
\lim\limits_{|x|\rightarrow\infty}\tilde{\mu}(x)\in\mathbb{R}\Big\}.$$
Moreover, each sequence $\{h_n^j\}_{n\in\mathbb{N}}$ is either going
to $0$ or identically $1$ for all $n$.
\end{lemma}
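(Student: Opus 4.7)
\noindent\textit{Proof plan.} The plan is to run the classical Bahouri--G\'erard iterative extraction scheme, adapted here to the Klein--Gordon setting. The new feature compared to the pure wave case is the scale parameter $h_n^j \in (0,1]$, which interpolates between the genuinely massive regime ($h_n^j \equiv 1$, so that $\langle\nabla\rangle_n^j = \langle\nabla\rangle$) and the wave-like regime ($h_n^j \to 0$, where after the rescaling $T_n^j$ the rescaled generator $\langle\nabla\rangle_n^j/h_n^j$ degenerates toward $|\nabla|$); the intertwining $e^{i\langle\nabla\rangle t}T_n^j = T_n^j e^{i\langle\nabla\rangle_n^j t/h_n^j}$ is the key algebraic identity linking the two sides of the formula for $\vec{v}_n^j$.

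First I would prove an inverse extraction lemma: if $\|\vec{v}_n(0)\|_{L^2}$ stays bounded and $\liminf_n\|\vec{v}_n\|_{L^\infty_t B^{-d/2}_{\infty,\infty}} \geq \nu > 0$, then after passing to a subsequence there exist $(t_n,x_n,h_n) \in \R\times\R^d\times(0,1]$ and $\varphi \in L^2$ with $\|\varphi\|_{L^2} \gtrsim \nu$ such that $(T_n)^{-1}\vec{v}_n(t_n) \rightharpoonup \varphi$ weakly in $L^2$. This is a Littlewood--Paley argument: the Besov lower bound supplies a dyadic frequency scale of order $1/h_n$ and a spacetime point $(t_n,x_n)$ at which the frequency-localised free flow has amplitude $\gtrsim \nu$; rescaling via $T_n$ and transferring to time $t_n$ converts that amplitude into the desired weak $L^2$ limit.

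Next I would iterate: take $\varphi^0$ and $(t_n^0,x_n^0,h_n^0)$ from the first extraction, set $\vec{v}_n^0$ as in the statement, define $\vec{\omega}_n^1 := \vec{v}_n - \vec{v}_n^0$, and as long as $\limsup_n\|\vec{\omega}_n^k\|_{L^\infty_t B^{-d/2}_{\infty,\infty}}$ is positive, reapply the inverse lemma to the current error. The asymptotic Pythagorean identity
\[
\|\vec{v}_n(0)\|_{L^2}^2 = \sum_{j=0}^{k-1}\|\varphi^j\|_{L^2}^2 + \|\vec{\omega}_n^k(0)\|_{L^2}^2 + o_{n\to\infty}(1),
\]
itself a consequence of weak convergence against an $L^2$ limit together with the orthogonality of parameters, forces $\sum_j\|\varphi^j\|_{L^2}^2 < \infty$, so $\|\varphi^j\|_{L^2} \to 0$; combined with the quantitative lower bound from the inverse lemma, this yields \eqref{equ3.2} as $k \to K$.

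The parameter orthogonality \eqref{equ3.4} is established at each step by contradiction: if two parameter triples had bounded scale ratio and bounded normalised shifts, then one profile would already be captured inside the other at the earlier stage, contradicting the nontriviality of the newly extracted weak limit. Orthogonality then propagates to \eqref{equ3.3} because, for every $\mu \in \mathcal{MC}$, the composed operator $(T_n^j)^{-1}\mu\, T_n^l$ tends weakly to zero on $L^2$, either by a Riemann--Lebesgue argument when the scale ratios degenerate (which is precisely where the hypothesis that $\tilde\mu$ has a limit at infinity enters: it forces the conjugation $T_n^{-1}\mu T_n$ to strongly converge to a scalar multiple of the identity), or by the dispersive decay of the free Klein--Gordon group along the diverging time/space shifts, applied via Lemma \ref{lem21}. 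A concluding diagonal extraction merges the subsequences produced at each step. The main obstacle I anticipate is the inverse extraction lemma: the dispersive estimates of Lemma \ref{lem21} must be used uniformly across the two regimes $h_n \to 0$ and $h_n \equiv 1$, so that a single weak-$L^2$ limit captures the worst concentrating profile regardless of whether it lives at a ``wave'' or at a ``massive'' frequency scale.
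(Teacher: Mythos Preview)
The paper does not supply its own proof of this lemma: it is quoted verbatim from \cite{IMN} and used as a black box, so there is no argument in the paper to compare against. Your outline is essentially the strategy carried out in \cite{IMN}---an inverse Besov extraction followed by the Bahouri--G\'erard iteration, with the scale dichotomy $h_n^j\to 0$ versus $h_n^j\equiv 1$ and the intertwining identity $e^{i\langle\nabla\rangle t}T_n^j=T_n^j e^{i\langle\nabla\rangle_n^j t/h_n^j}$ as the Klein--Gordon--specific ingredients---so your plan is correct and aligned with the cited source rather than a different route.
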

\begin{remark}
We call $\{\vec{v}_n^j\}_{n\in\mathbb{N}}$ a free concentrating wave
for each $j$, and $\vec{w}_n^k$ the remainder. From \eqref{equ3.3},
we have the following asymptotic orthogonality
\begin{equation}\label{orth}
\lim_{k\to K}\lim\limits_{n\rightarrow+\infty}\Big(\|\mu\vec{v}_n(t)\|_{L^2}^2-\sum\limits_{j=0}^{k-1}\|\mu\vec{v}_n^j(t)\|_{L^2}^2
-\|\mu\vec{\omega}_n^k(t)\|_{L^2}^2\Big)=0.
\end{equation}
\end{remark}

We remark the following estimates for $1<p<\infty,$
\begin{align}\label{equ1.6.1}
\Big\|\big[|\nabla|-\langle\nabla\rangle_n\big]\varphi\Big\|_p\lesssim&
h_n\big\|\langle\nabla/h_n\rangle^{-1}\varphi\big\|_p,
\end{align}
hold uniformly for $0<h_n\leq1,$ by Mihlin's theorem on Fourier
multipliers.

\subsection{Nonlinear profile decomposition}
After the linear profile decomposition of a sequence of initial data
in the last subsection, we now show the nonlinear profile
decomposition of a sequence of the solutions of \eqref{equ1} with
the same initial data in the space $H^{s_c}(\mathbb{R}^d)\times
H^{s_c-1}(\mathbb{R}^d).$

First we construct a nonlinear profile associated with a free
concentrating wave. Let $\vec{v}_n^j$ be a free concentrating wave
for a sequence $(t_n^j,x_n^j,h_n^j)\in\R\times\R^d\times(0,1]$,
\begin{align}
\begin{cases}
(i\partial_t+\langle\nabla\rangle)\vec{v}_n^j=0,\\
\vec{v}_n^j(t_n)=T_n\varphi^j(x),\ \varphi^j(x)\in L^2(\R^d).
\end{cases}
\end{align}
Then by Lemma \ref{lem3.1}, for a sequence of free Klein-Gordon
solutions $\{\vec{v}_n(t)=e^{it\langle\nabla\rangle}\vec{v}_n(0)\}$
with uniformly bounded $L_x^2(\R^d)$-norm, we have a sequence of the
free concentrating wave $\vec{v}_n^j(t,x)$ with
$\vec{v}_n^j(t_n^j)=T_n^j\varphi^j,~\varphi^j\in L^2(\R^d)$ for
$j=0,1,\cdots,k-1,$ such that
\begin{align*}
\vec{v}_n(t,x)=&\sum_{j=0}^{k-1}\vec{v}^j_n(t,x)+\vec{\omega}^k_n(t,x)\\
=&\sum_{j=0}^{k-1}e^{i\langle\nabla\rangle(t-t^j_n)}T_n^j\varphi^j(x)+\vec{\omega}^k_n(t,x)\\
=&\sum_{j=0}^{k-1}T_n^je^{i\big(\frac{t-t^j_n}{h_n^j}\big)\langle\nabla\rangle_n^j}\varphi^j+\vec{\omega}^k_n(t,x).
\end{align*}

Now for any free concentrating wave $\vec{v}_n^j$, we undo the group
action $T_n^j$ to look for the linear profile $\vec{V}^j_n$. Let
$$\vec{v}_n^j(t,x)=T_n^j\vec{V}_n^j\big((t-t_n^j)/h_n^j\big),$$
then we have
$$\vec{V}_n^j(t,x)=e^{it\langle\nabla\rangle_n^j}\varphi^j.$$

Next let $\vec{u}_n^j$ be the nonlinear solution with the same
initial data $\vec{v}_n^j(0)$
\begin{align}
\begin{cases}
\big(i\partial_t+\langle\nabla\rangle\big)\vec{u}_n^j=-\langle\nabla\rangle^{s_c-1}f\big(\Re\langle\nabla\rangle^{-s_c}\vec{u}_n^j\big),\\
\vec{u}_n^j(0)=\vec{v}_n^j(0)=T_n^j\vec{V}^j_n(\tau_n^j),
\end{cases}
\end{align}
where $\tau_n^j=-t_n^j/h_n^j.$ In order to look for the nonlinear
profile $\vec{U}_\infty^j$ associated with the free concentrating
wave $\vec{v}_n^j$, we also need undo the group action. Define
$$\vec{u}_n^j(t,x)=T_n^j\vec{U}_n^j\big((t-t_n^j)/h_n^j\big),$$
then $\vec{U}^j_n$ satisfies the rescaled equation
\begin{align*}\begin{cases}
\big(i\partial_t+\langle\nabla\rangle_n^j\big)\vec{U}_n^j=
-\big(\langle\nabla\rangle_n^j\big)^{s_c-1}f\big(\Re(\langle\nabla\rangle_n^j)^{-s_c}\vec{U}_n^j\big),\\
\vec{U}_n^j(\tau_n^j)=\vec{V}^j_n(\tau_n^j).
\end{cases}
\end{align*}
Extracting a subsequence, we may assume that there exist
$h_\infty^j\in\{0,1\}$ and $\tau_\infty^j\in[-\infty,+\infty]$ for
every $j$, such that as $n\to+\infty$
$$h_n^j\rightarrow h^j_\infty,~\text{and}~\tau_n^j\to\tau_\infty^j.$$
Thus we have the limit equations as follows
\begin{align*}\vec{V}_\infty^j=e^{it\langle\nabla\rangle_\infty^j}\varphi^j,\quad
\begin{cases}
\big(i\partial_t+\langle\nabla\rangle_\infty^j\big)\vec{U}_\infty^j=-\big(\langle\nabla\rangle_\infty^j\big)^{s_c-1}f\big(\hat{U}_\infty^j\big),\\
\vec{U}_\infty^j(\tau_\infty^j)=\vec{V}^j_\infty(\tau_\infty^j),
\end{cases}
\end{align*}
where $\hat{U}_\infty^j$ is denoted to be
\begin{align}\label{un1}
\hat{U}_\infty^j:=\Re\big(\langle\nabla\rangle_\infty^j\big)^{-s_c}\vec{U}_\infty^j=
\begin{cases}
\Re\langle\nabla\rangle^{-s_c}\vec{U}_\infty^j~~\text{if}~~h^j_\infty=1,\\
\Re|\nabla|^{-s_c}\vec{U}_\infty^j~~\text{if}~~h^j_\infty=0.
\end{cases}
\end{align}

We remark that the unique existence of a local solution
$\vec{U}_\infty^j$ around $t=\tau_\infty^j$ is known in all cases,
including $h_\infty^j=0$ and $\tau_\infty^j=\pm\infty$. We say that
$\vec{U}_\infty^j$ on the maximal existence interval is the
nonlinear profile corresponding to the free concentrating wave
$(\vec{v}_n^j;t_n^j,x_n^j,h_n^j)$.

The nonlinear concentrating wave $\vec{u}_{(n)}^j$ corresponding to
$\vec{v}_n^j$ is defined by
\begin{equation}\label{un}
\vec{u}_{(n)}^j(t,x):=T_n^j\vec{U}_\infty^j\big((t-t_n^j)/h_n^j\big).
\end{equation}
When $h_\infty^j=1$, $u_{(n)}^j$ solves \eqref{equ1}. While
$h_\infty^j=0,$ then it solves
\begin{align}\label{errsm}\begin{cases}
(i\partial_t+\langle\nabla\rangle)\vec{u}_{(n)}^j
=\big(\langle\nabla\rangle-|\nabla|\big)\vec{u}_{(n)}^j-|\nabla|^{s_c-1}f\big(|\nabla|^{-s_c}\langle\nabla\rangle^{s_c}
u_{(n)}^j\big),\\
\vec{u}_{(n)}^j(0)=T_n^j\vec{U}_\infty^j(\tau_n^j).
\end{cases}
\end{align}
The existence time interval of $u_{(n)}^j$ may be finite and even go
to $0$, however, we have
\begin{equation}\label{lea}
\begin{split}
\|\vec{u}_n^j(0)&-\vec{u}_{(n)}^j(0)\|_{L_x^2}=\big\|T_n^j\vec{V}^j_n(\tau_n^j)-T_n^j\vec{U}_\infty^j(\tau_n^j)\big\|_{L_x^2}\\
\leq&\big\|\vec{V}^j_n(\tau_n^j)-\vec{V}_\infty^j(\tau_n^j)\big\|_{L_x^2}+\big\|\vec{V}^j_\infty(\tau_n^j)-\vec{U}_\infty^j(\tau_n^j)\big\|_{L_x^2}
\to0,
\end{split}
\end{equation}
as $n\to+\infty.$

Let $u_n$ be a sequence of  solutions of \eqref{equ1} around $t=0$,
and let $v_n$ be the sequence of the free solutions with the same
initial data. By  Lemma \ref{lem3.1}, we have the linear profile
decomposition for $\{\vec{v}_n\}$ as follows
\begin{equation*}
\vec{v}_n=\sum\limits_{j=0}^{k-1}\vec{v}^j_n+\vec{\omega}^k_n,\quad
\vec{v}^j_n=e^{i\langle\nabla\rangle(t-t^j_n)}T_n^j\varphi^j.
\end{equation*}
Now we define the nonlinear profile decomposition as follows.
\begin{definition}
[Nonlinear profile decomposition] Let
$\{\vec{v}_n^j\}_{n\in\mathbb{N}}$ be the free concentrating wave,
and $\{\vec{u}_{(n)}^j\}_{n\in\mathbb{N}}$ be the sequence of the
nonlinear concentrating wave corresponding to
$\{\vec{v}_n^j\}_{n\in\mathbb{N}}$. Then we define the nonlinear
profile decomposition of $u_n$ by
\begin{equation}\label{nonlineard}
\vec{u}_{(n)}^{<k}:=\sum\limits_{j=0}^{k-1}\vec{u}_{(n)}^j=\sum_{j=0}^{k-1}T_n^j\vec{U}_\infty^j\big((t-t_n^j)/h_n^j\big).
\end{equation}
\end{definition}

We will show that $\vec{u}_{(n)}^{<k}+\vec{\omega}_n^k$ is a good
approximation for $\vec{u}_n$ provided that each nonlinear profile
has finite global Strichartz norm.

Next we define the Strichartz norms for the nonlinear profile
decomposition. Recall that $ST(I)$ and $ST^\ast(I)$ are the
functions spaces on $I\times\R^d$ defined as above
\begin{align*}
ST(I)=&[W](I)=L_t^{\frac{2(d+1)}{d-1}}(I;B^{s_c-\frac12}_{\frac{2(d+1)}{d-1},2}(\mathbb{R}^d)),\\
ST^\ast(I)=&[W]^*(I)\oplus L_t^1(I; B_{2,2}^{s_c-1}(\R^d)).
\end{align*}
And the Strichartz norm for the nonlinear profile $\hat{U}_\infty^j$
is defined by
\begin{align}
ST_\infty^j(I):= \begin{cases} ST(I)~~\qquad\qquad\text{if}~~h_\infty^j=1,\\
L_t^q(I;\dot{B}^\frac{d-3}2_{q,2})~~\big(q=\frac{2(d+1)}{d-1}\big)~~\text{if}~~h_\infty^j=0.
\end{cases}
\end{align}

The following two lemmas derive from Lemma \ref{lem3.1} and the
perturbation lemma. The first lemma concerns the orthogonality in
the Strichartz norms.
\begin{lemma}\label{lem3.3}
Assume that in \eqref{nonlineard}, we have
\begin{equation}\label{equ3.7}
\|\hat{U}^j_\infty\|_{ST_\infty^j(\mathbb{R})}+\|\vec{U}^j_\infty\|_{L_t^\infty
L^2_x(\mathbb{R})}<+\infty,\ \forall\ j<k.
\end{equation}
Then, for any finite interval $I, j<k,$ one has
\begin{align}\label{equ3.8}
\varlimsup\limits_{n\rightarrow\infty}\|u_{(n)}^j\|_{ST(I)}&\lesssim\|\hat{U}_\infty^j\|_{ST_\infty^j(\mathbb{R})},\\\label{equ3.9}
\varlimsup\limits_{n\rightarrow\infty}\|u_{(n)}^{<k}\|_{ST(I)}^2&\lesssim\varlimsup\limits_{n\rightarrow\infty}
\sum\limits_{j=0}^{k-1}\big\|u_{(n)}^j\big\|_{ST(\mathbb{R})}^2,
\end{align}
where the implicit constants is independent of $I$ and $j$.
Furthermore, we have
\begin{equation}\label{equ3.10}
\lim\limits_{n\rightarrow\infty}\bigg\|f\big(u_{(n)}^{<k}\big)-\sum\limits_{j=0}^{k-1}
\Big(\frac{\langle\nabla\rangle_\infty^j}{\langle\nabla\rangle}\Big)^{s_c-1}
f\Big(\Big(\frac{\langle\nabla\rangle}{\langle\nabla\rangle_\infty^j}\Big)^{s_c}
u_{(n)}^j\Big)\bigg\|_{ST^*(I)}=0,
\end{equation}
where $f(u)=|u|^2u.$
\end{lemma}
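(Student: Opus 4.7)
The plan is to reason case-by-case on $h_\infty^j\in\{0,1\}$, systematically exploiting the asymptotic orthogonality \eqref{equ3.4} together with the translation/scaling covariance of the Strichartz-type norms.

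For \eqref{equ3.8}, I would reduce to a bound on $\hat U_\infty^j$ via the identity $u_{(n)}^j(t,x)=T_n^j\,U_\infty^j\bigl((t-t_n^j)/h_n^j\bigr)$. When $h_\infty^j=1$ (so $h_n^j\equiv 1$), $u_{(n)}^j$ is a pure spacetime translate of $U_\infty^j$, and translation invariance of $[W](I)$ yields the claim at once. When $h_\infty^j=0$, changing variables with $T_n^j$ and using $\langle\nabla\rangle_n^j=h_n^j\langle\nabla/h_n^j\rangle$ converts the $B^{(d-3)/2}_{q,2}$ norm into a $\langle\nabla\rangle_n^j$-adapted norm on $U_n^j$; passing to $h_n^j\to 0$ via \eqref{equ1.6.1} produces the homogeneous Besov norm $\dot B^{(d-3)/2}_{q,2}$ of $\hat U_\infty^j$, up to an $o(1)$ correction controlled by the uniform $L^\infty_t L^2_x$ bound on $\vec U_\infty^j$.

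For \eqref{equ3.9}, I would expand
\[ \bigl\|u_{(n)}^{<k}\bigr\|_{ST(I)}^2 \;\leq\; \sum_{j<k}\bigl\|u_{(n)}^j\bigr\|_{ST(I)}^2+\sum_{j\neq l}C^n_{j,l}, \]
where $C^n_{j,l}$ is a bilinear cross term arising from squaring the Besov norm. By \eqref{equ3.4}, for each pair $j\neq l$ at least one of the three ratios $h_n^j/h_n^l$ (or its reciprocal), $|t_n^j-t_n^l|/\max(h_n^j,h_n^l)$, $|x_n^j-x_n^l|/\max(h_n^j,h_n^l)$ diverges as $n\to\infty$. For each separation mechanism I would first approximate $U_\infty^j,U_\infty^l$ by smooth compactly-supported functions (losing at most $\varepsilon$ in the relevant ST-norm) and then apply a H\"older-in-spacetime estimate in which the almost-disjoint supports force $C^n_{j,l}\to 0$; letting $\varepsilon\to 0$ at the end yields \eqref{equ3.9}.

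The main obstacle is \eqref{equ3.10}, which I plan to split into two contributions. First, the algebraic expansion
\[ f\bigl(u_{(n)}^{<k}\bigr)-\sum_{j<k}f\bigl(u_{(n)}^j\bigr)=\sum_{(j_1,j_2,j_3)\ \text{not all equal}} u_{(n)}^{j_1}u_{(n)}^{j_2}u_{(n)}^{j_3} \]
produces mixed trilinear cross terms which vanish in $ST^*(I)$ by the orthogonality argument of the preceding step, now plugged into the $[W]^\ast$ trilinear estimate of Lemma \ref{nlest}. Second, for each $j$ with $h_\infty^j=0$ one must replace $f(u_{(n)}^j)$ by $(\langle\nabla\rangle_\infty^j/\langle\nabla\rangle)^{s_c-1}f\bigl((\langle\nabla\rangle/\langle\nabla\rangle_\infty^j)^{s_c}u_{(n)}^j\bigr)$. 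Since $u_{(n)}^j$ concentrates at spatial frequencies $\gtrsim 1/h_n^j\to\infty$, the symbol $\langle\xi\rangle/\langle\xi\rangle_n^j$ tends to $1$ on the frequency support of $u_{(n)}^j$; quantifying this via \eqref{equ1.6.1} and a Mihlin multiplier estimate yields an $o(1)$ correction in $ST^*(I)$, once combined with the uniform bounds from the first step. The delicate point, and my main concern, is that the operator $(\langle\nabla\rangle/\langle\nabla\rangle_\infty^j)^{s_c}$ appearing inside the nonlinearity is of fractional order; rather than attacking it directly, I would conjugate everything back to the rescaled profile $U_n^j$ via $T_n^j$, where the operators become Fourier multipliers with symbols converging uniformly to $1$ on compact sets, and then close the estimate using the product rule of Lemma \ref{moser}.
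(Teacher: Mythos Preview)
Your overall strategy matches the paper's: case split on $h_\infty^j\in\{0,1\}$ for \eqref{equ3.8}, orthogonality via compactly supported approximation for \eqref{equ3.9}, and a trilinear decoupling plus multiplier correction for \eqref{equ3.10}. Two points deserve comment.

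\textbf{On \eqref{equ3.9}.} Your display
\[
\bigl\|u_{(n)}^{<k}\bigr\|_{ST(I)}^2 \;\leq\; \sum_{j<k}\bigl\|u_{(n)}^j\bigr\|_{ST(I)}^2+\sum_{j\neq l}C^n_{j,l}
\]
with ``bilinear cross terms $C^n_{j,l}$ arising from squaring the Besov norm'' is not literally correct: $ST(I)=L^q_t B^{(d-3)/2}_{q,2}$ with $q=\tfrac{2(d+1)}{d-1}>2$ is not a Hilbert norm, so no bilinear expansion is available. The mechanism the paper uses (citing \cite{IMN}) is the one you in fact describe afterwards: approximate each $\hat U_\infty^j$ by a compactly supported function, then exploit that under \eqref{equ3.4} the rescaled supports become disjoint in spacetime, whence $\bigl\|\sum_j u_{(n),R}^j\bigr\|_{L^q}^q\to\sum_j\|u_{(n),R}^j\|_{L^q}^q$. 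Since $q>2$, the embedding $\ell^2\hookrightarrow\ell^q$ then upgrades this to the squared inequality. So your proof goes through once you replace the ``bilinear'' picture by this $q$-th power / disjoint-support argument; the paper also first separates the sums over $\{j:h_\infty^j=1\}$ and $\{j:h_\infty^j=0\}$, which is convenient because only pure translates appear in the first group.

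\textbf{On \eqref{equ3.10}.} The paper carries out exactly the ``conjugate back to the rescaled profile'' step you anticipate, but does it \emph{before} the trilinear expansion rather than after. Concretely it introduces
\[
u_{\langle n\rangle}^{j}:=\Big(\tfrac{\langle\nabla\rangle}{\langle\nabla\rangle_\infty^j}\Big)^{s_c}u_{(n)}^j=(h_n^j)^{s_c}T_n^j\,\hat U_\infty^j\bigl((t-t_n^j)/h_n^j\bigr),
\]
which is a \emph{pure} space-time rescaling of $\hat U_\infty^j$ with no Fourier multiplier attached. The proof then splits into three pieces: $f(u_{(n)}^{<k})-f(u_{\langle n\rangle}^{<k})$, then $f(u_{\langle n\rangle}^{<k})-\sum_j f(u_{\langle n\rangle}^{j})$, then the outer multiplier correction $\sum_j\bigl[1-(\langle\nabla\rangle_\infty^j/\langle\nabla\rangle)^{s_c-1}\bigr]f(u_{\langle n\rangle}^{j})$. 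The advantage over your two-step split is that the orthogonality/decoupling argument (middle piece) is run on the multiplier-free objects $u_{\langle n\rangle}^j$, so the compact-support approximation is not disturbed by a nonlocal operator; the multiplier errors are isolated in the first and third pieces, where they are handled one profile at a time via \eqref{equ1.6.1}. Your version---decouple $u_{(n)}^j$ first, then fix the multiplier inside $f$ for each $j$---should also close, but you will need to check that the convolution by $\langle\nabla\rangle^{-s_c}(\langle\nabla\rangle_n^j)^{s_c}$ does not spoil the almost-disjoint-support argument; the paper's ordering sidesteps this.
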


\begin{proof} {\bf Proof of \eqref{equ3.8}: Case 1: $h_\infty^j=1.$}

It is easy to see that  $u_{(n)}^ j$ is just a sequence of
space-time translations of $\hat{U}_\infty^j$ in this case. And so
\eqref{equ3.8} follows in this case.

{\bf Case 2: $h_\infty^j=0$.}

We drop the superscript $j$ in the following. Using the definition
of $u_{(n)}$ and $\hat{U}_\infty$, we derive
$$u_{(n)}(t,x)=h_n^{s_c}T_n|\nabla|^{s_c}\langle\nabla\rangle_n^{-s_c}\widehat{U}_\infty\big((t-t_n)/h_n\big).$$
By Sobolev embedding $\dot{B}^0_{p,2}\subset L^p$ with $p\geq2$ in
the lower frequencies and scaling, one has for
$s:=s_c-\frac12=\frac{d-3}2,\ p=\frac{2(d+1)}{d-1},$
\begin{align*}
\big\|u_{(n)}\big\|_{B^s_{p,2}}\simeq&\big\|u_{(n)}\big\|_{L^p}+\big\|2^{js}
\|\Delta_j u_{(n)}\|_{L^p} \big\|_{l^2_{j\in \mathbb{N}}}\\
\lesssim&\big\| \|\Delta_j u_{(n)}\|_{L^p}
\big\|_{l^2_{j\in\mathbb{Z}^-}}+\big\|2^{js}
\|\Delta_j u_{(n)}\|_{L^p} \big\|_{l^2_{j\in \mathbb{N}}}\\
\lesssim&\Big\|2^{js}\big\|\Delta_j|\nabla|^{-s}\langle\nabla\rangle^{s}u_{(n)}\big\|_{L^p}\Big\|_
{l^2_{j\in\mathbb{Z}}}\\
\lesssim&h_n^{s_c-s-\frac{d}2+\frac{d}p}\Big\|
2^{js}\big\|\Delta_j|\nabla|^{s_c-s}
\langle\nabla\rangle_n^{s-s_c}\widehat{U}_\infty\big((t-t_n)/h_n\big)
\big\|_{L^p}\Big\|_
{l^2_{j\in\mathbb{Z}}}\\
\lesssim&h_n^{s_c-s-\frac{d}2+\frac{d}p}\Big\|
2^{js}\big\|\Delta_j\widehat{U}_\infty\big((t-t_n)/h_n\big) \big
\|_{L^p}\Big\|_
{l^2_{j\in\mathbb{Z}}}\\
\lesssim&h_n^{s_c-s-\frac{d}2+\frac{d}p}\Big\|\widehat{U}_\infty\big((t-t_n)/h_n\big)\Big\|_{\dot{B}^s_{p,2}}.
\end{align*}
Therefore, we obtain by scaling
\begin{align*}
\big\|u_{(n)}\big\|_{[W](I)}\lesssim&h_n^{s_c-s-\frac{d}2+\frac{d}p}\Big\|\big\|\widehat{U}_\infty\big((t-t_n)/h_n\big)\big\|_{\dot{B}^s_{p,2}}\Big\|_{
L_t^p(\R)}\\
\lesssim&\big\|\hat{U}_\infty\big\|_{L_t^p(\R;
\dot{B}^s_{p,2})}=\big\|\hat{U}_\infty\big\|_{ST_\infty(\R)},
\end{align*}
which concludes the proof of \eqref{equ3.8}.

{\bf Proof of \eqref{equ3.9}:} We estimate the left hand side of
\eqref{equ3.9} by
\begin{align*}
\big\|u_{(n)}^{<k}\big\|_{ST(I)}^2=&\bigg\|\sum_{j<k:
h_\infty^j=1}u_{(n)}^j+\sum_{j<k:
h_\infty^j=0}u_{(n)}^j\bigg\|_{ST(I)}^2\\
\lesssim&\Big\|\sum_{j<k:
h_\infty^j=1}u_{(n)}^j\Big\|_{ST(I)}^2+\Big\|\sum_{j<k:
h_\infty^j=0}u_{(n)}^j\Big\|_{ST(I)}^2.
\end{align*}
For the case $h_\infty^j=1$. Define $\widehat{U}_{\infty,R}^j,\
u_{(n),R}^j$ and $u_{(n),R}^{<k}$ by
\begin{align*}
\widehat{U}_{\infty,R}^j=\chi_R\widehat{U}_\infty^j,~u_{(n),R}^j=T_n^j\widehat{U}_{\infty,R}^j,~u_{(n),R}^{<k}:=\sum\limits_{j<k}u_{(n),R}^j,
\end{align*}
where $\chi_R(t,x)=\chi(t/R,x/R)$ and $\chi(t,x)\in
C_c^\infty(\R^{1+d})$ is the cut-off defined by
\begin{align*}
\chi(t,x)=\begin{cases}
1,\qquad |(t,x)|\leq1,\\
0,\qquad |(t,x)|\geq2.
\end{cases}
\end{align*}
Then we have
$$\Big\|\sum_{j<k:
h_\infty^j=1}u_{(n)}^j\Big\|_{ST(I)}^2\lesssim\Big\|\sum_{j<k:
h_\infty^j=1}u_{(n),R}^j\Big\|_{ST(I)}^2+\Big\|\sum_{j<k:
h_\infty^j=1}u_{(n)}^j-\sum_{j<k:
h_\infty^j=1}u_{(n),R}^j\Big\|_{ST(I)}^2.$$ On one hand, we know
that
$$\Big\|\sum_{j<k:
h_\infty^j=1}u_{(n)}^j-\sum_{j<k:
h_\infty^j=1}u_{(n),R}^j\Big\|_{ST(I)}\leq\sum\limits_{j<k:
h_\infty^j=1}\Big\|(1-\chi_R)\widehat{U}_\infty^j\Big\|_{ST(\R)}\rightarrow~0,$$
as $R \rightarrow~+\infty.$ On the other hand, by \eqref{equ3.4},
the similar orthogonality and approximation analysis as in
\cite{IMN}, we obtain
$$\varlimsup_{n\to\infty}\Big\|\sum_{j<k:
h_\infty^j=1}u_{(n)}^j\Big\|_{ST(I)}^2\lesssim\varlimsup_{n\to\infty}\Big\|\sum_{j<k:
h_\infty^j=1}u_{(n)}^j\Big\|_{ST(I)}^2\lesssim\varlimsup_{n\to\infty}\sum_{j<k:
h_\infty^j=1}\Big\|u_{(n)}^j\Big\|_{ST(I)}^2,$$ and for the case
$h_\infty^j=0$
$$\varlimsup_{n\to\infty}\Big\|\sum_{j<k:
h_\infty^j=0}u_{(n)}^j\Big\|_{ST(I)}^2\lesssim\varlimsup_{n\to\infty}\sum_{j<k:
h_\infty^j=0}\Big\|u_{(n)}^j\Big\|_{ST(I)}^2.$$

{\bf Proof of \eqref{equ3.10}:} By the definition of $u_{(n)}^j$ and
$\hat{U}_\infty^j$, we know that
$$u_{(n)}^j(x,t)=\Re\langle\nabla\rangle^{-s_c}\vec{u}_{(n)}^j(t,x)=\Re\langle\nabla\rangle^{-s_c}T_n^j\vec{U}_\infty^j\Big(\frac{t-t_n^j}{h_n^j}\Big)
=(h_n^j)^{s_c}T_n^j\Big(\frac{\langle\nabla\rangle_\infty^j}{\langle\nabla\rangle_n^j}\Big)^{s_c}
\hat{U}_\infty^j\Big(\frac{t-t_n^j}{h_n^j}\Big).$$ Let $u_{\langle
n\rangle}^{<k}(t,x)=\sum\limits_{j<k}u_{\langle n\rangle}^{j}(x,t),$
where $u_{\langle n\rangle}^{j}(x,t)$ is defined by
$$u_{\langle
n\rangle}^{j}(x,t)=\Big(\frac{\langle\nabla\rangle}{\langle\nabla\rangle_\infty^j}\Big)^{s_c}u_{(n)}^j
=(h_n^j)^{s_c}T_n^j
\hat{U}_\infty^j\Big(\frac{t-t_n^j}{h_n^j}\Big).$$ Then we have
\begin{align}\nonumber
&\bigg\|f\big(u_{(n)}^{<k}\big)-\sum\limits_{j=0}^{k-1}
\Big(\frac{\langle\nabla\rangle_\infty^j}{\langle\nabla\rangle}\Big)^{s_c-1}
f\Big(\Big(\frac{\langle\nabla\rangle}{\langle\nabla\rangle_\infty^j}\Big)^{s_c}
u_{(n)}^j\Big)\bigg\|_{ST^*(I)}\\\nonumber
\leq&\big\|f\big(u_{(n)}^{<k}\big)-f\big(u_{\langle
n\rangle}^{<k}\big)\big\|_{ST^*(I)}+\big\|f(u_{\langle
n\rangle}^{<k})-\sum\limits_{j<k}f(u_{\langle
n\rangle}^{j})\big\|_{ST^*(I)}\\\nonumber
&+\Big\|\sum\limits_{j<k}f(u_{\langle
n\rangle}^{j})-\sum\limits_{j<k}\Big(\frac{\langle\nabla\rangle_\infty^j}{\langle\nabla\rangle}\Big)^{s_c-1}f(u_{\langle
n\rangle}^{j})\Big\|_{ST^*(I)}\\\label{equ3.26}
\leq&\big\|f\big(u_{(n)}^{<k}\big)-f\big(u_{\langle
n\rangle}^{<k}\big)\big\|_{ST^*(I)}+\big\|f(u_{\langle
n\rangle}^{<k})-\sum\limits_{j<k}f(u_{\langle
n\rangle}^{j})\big\|_{ST^*(I)}\\\label{equ3.27}
&+\Big\|\sum\limits_{j<k:h_\infty^j=0}f(u_{\langle
n\rangle}^{j})-\sum\limits_{j<k:h_\infty^j=0}\Big(\frac{|\nabla|}{\langle\nabla\rangle}\Big)^{s_c-1}f(u_{\langle
n\rangle}^{j})\Big\|_{ST^*(I)}.
\end{align}
Using \eqref{equ3.4} and the approximation argument in \cite{IMN},
we get
$$\eqref{equ3.26}\to 0$$
as $n\to \infty.$ In addition, by $h_n^j\to 0$ as $n\to \infty,$ one
has
\begin{align*}
&\Big\|\sum\limits_{j<k:h_\infty^j=0}\Big(1-\Big(\frac{|\nabla|}{\langle\nabla\rangle}\Big)^{s_c-1}\Big)f\big(u_{\langle
n\rangle}^{j}\big)\Big\|_{ST^*(I)}\\
\lesssim&\sum\limits_{j<k:h_\infty^j=0}\Big\|\Big(1-\Big(\frac{|\nabla|}{\langle\nabla\rangle_n^j}\Big)^{s_c-1}\Big)
f\big(\hat{U}_\infty^{j}\big)\Big\|_{ST^*(I)}\to 0
\end{align*} as  $n\to \infty.$ Hence we obtain \eqref{equ3.10}. And so we
complete the proof of this lemma.
\end{proof}
With this preliminaries in hand, we now show that
$\vec{u}_{(n)}^{<k}+\vec{\omega}_n^k$ is a good approximation for
$\vec{u}_n$ provided that each nonlinear profile has finite global
Strichartz norm.
\begin{lemma}\label{precldes}
Assume that $u_n$ is a sequence of local solutions of \eqref{equ1}
around $t=0$ obeying
$\varlimsup\limits_{n\rightarrow\infty}\|(u_n,\dot{u}_n)\|_{L_t^\infty(I_n;
H^{s_c}_x\times H^{s_c-1}_x)}<+\infty.$ Assume also that in its
nonlinear profile decomposition \eqref{nonlineard}, every nonlinear
profile $\vec{U}^j_\infty$ has finite global Strichartz and $L^2_x$
norms; that is
\begin{equation}\label{equ3.11}
\|\hat{U}^j_\infty\|_{ST_\infty^j(\mathbb{R})}+\|\vec{U}^j_\infty\|_{L_t^\infty
L_x^2(\mathbb{R})}<+\infty.
\end{equation}
Then $u_n$ is bounded for large $n$ in the Strichartz and the
$H^{s_c}$ norms, i.e.
\begin{equation}\label{equ3.12}
\varlimsup\limits_{n\rightarrow\infty}\big(\|u_n\|_{ST(\mathbb{R})}+\|\vec{u}_n\|_{L_t^\infty
L_x^2(\mathbb{R}\times \R^d)}\big)<+\infty.
\end{equation}

\end{lemma}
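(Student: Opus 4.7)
My plan is to apply the perturbation lemma (Lemma \ref{long}) with the approximate solution
$$w_n := u_{(n)}^{<k} + \omega_n^k = \sum_{j=0}^{k-1} u_{(n)}^j + \omega_n^k,$$
where I take $k$ large and then $n$ large. To invoke Lemma \ref{long} I first need the uniform a priori bounds: the $L^\infty_t L^2_x$-bound on $\vec w_n$ comes from the asymptotic orthogonality \eqref{orth} combined with \eqref{equ3.11} and the hypothesis on $u_n$, while the uniform bound $\|w_n\|_{ST(\R)} \leq M$ follows from \eqref{equ3.9} plus the Strichartz bound on $\omega_n^k$. Here it is crucial that the $L^2$-orthogonality forces all but finitely many profiles to satisfy $\|\vec U_\infty^j(0)\|_{L^2} \leq \eta_0$, so that the small data theory (Theorem \ref{small}) bounds $\|\hat U_\infty^j\|_{ST_\infty^j(\R)}$ by $\|\vec U_\infty^j(0)\|_{L^2}$ and the tail $\sum_{j \geq j_0} \|\hat U_\infty^j\|^2_{ST_\infty^j(\R)}$ is controlled uniformly in $k$. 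The initial data difference $\|\vec u_n(0) - \vec w_n(0)\|_{L^2_x} \to 0$ as $n \to \infty$ by \eqref{lea}, so the corresponding free evolution is small in $ST(\R)$ by Strichartz estimates.

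The core of the argument is to show that the error
$$eq(w_n) := (i\partial_t + \langle\nabla\rangle)\vec{w}_n + \langle\nabla\rangle^{s_c-1} f(w_n)$$
has small $ST^\ast(\R)$-norm. Since $\vec\omega_n^k$ is a free solution and each $\vec u_{(n)}^j$ satisfies \eqref{equ1} when $h_\infty^j = 1$ or \eqref{errsm} when $h_\infty^j = 0$, I can split this error into three pieces to handle separately: (i) the cross-interaction defect $\langle\nabla\rangle^{s_c-1}\big[f(u_{(n)}^{<k}) - \sum_j (\langle\nabla\rangle_\infty^j/\langle\nabla\rangle)^{s_c-1} f((\langle\nabla\rangle/\langle\nabla\rangle_\infty^j)^{s_c} u_{(n)}^j)\big]$, which is $o(1)$ in $ST^\ast$ as $n\to\infty$ by \eqref{equ3.10}; (ii) the low-frequency mismatch $\sum_{h_\infty^j = 0}(\langle\nabla\rangle - |\nabla|)\vec u_{(n)}^j$ coming from \eqref{errsm}, which is small by \eqref{equ1.6.1} and $h_n^j \to 0$; and (iii) the remainder correction $\langle\nabla\rangle^{s_c-1}[f(u_{(n)}^{<k} + \omega_n^k) - f(u_{(n)}^{<k})]$, whose cubic expansion produces trilinear terms containing at least one factor of $\omega_n^k$.

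Once $\|eq(w_n)\|_{ST^\ast(\R)}$ is driven below $\epsilon_1(M, E)$ by choosing $k$ large and then $n$ large, Lemma \ref{long} yields $\|u_n - w_n\|_{ST(\R)} = O(\epsilon)$ and therefore $\|u_n\|_{ST(\R)} \leq M + C\epsilon < +\infty$; the Strichartz estimate \eqref{str} applied to $u_n$ then delivers the $L^\infty_t L^2_x$-bound on $\vec u_n$ required by \eqref{equ3.12}. I expect the main obstacle to be piece (iii): the smallness of $\omega_n^k$ is given only in the weak norm $L^\infty_t B^{-d/2}_{\infty,\infty}$ via \eqref{equ3.2}, and I need to convert this into $L^{d+1}_{t,x}$-smallness by interpolating against the uniform Strichartz bound inherited from the $L^2$-boundedness of $\vec\omega_n^k$. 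Once this interpolation is established, Lemma \ref{nlest} applied to each of the mixed trilinear products absorbs piece (iii) and closes the argument.
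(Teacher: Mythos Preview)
Your proposal follows the paper's approach essentially verbatim: same approximate solution $w_n=u_{(n)}^{<k}+\omega_n^k$, same three-way splitting of the error, same appeals to \eqref{equ3.10}, \eqref{equ1.6.1}, and the small-data dichotomy for all but finitely many profiles. Your discussion of piece~(iii)---interpolating the $L^\infty_t B^{-d/2}_{\infty,\infty}$ smallness of $\vec\omega_n^k$ against its uniform Strichartz bound to get $L^{d+1}_{t,x}$-smallness, then feeding this into the trilinear estimate---is exactly what the paper's one-line citation of Lemma~\ref{lem3.1} and Lemma~\ref{lem3.3} amounts to.

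One technical point deserves attention. You propose to run the whole argument on $\R$, but for piece~(ii) this does not quite work: the paper places $(\langle\nabla\rangle-|\nabla|)\vec u_{(n)}^j$ in $L^1_t(I;B^{s_c-1}_{2,2})$ and bounds it by $|I|\cdot\|\langle\nabla/h_n^j\rangle^{-1}\vec U_\infty^j\|_{L^\infty_t L^2_x}$ (see \eqref{xuxy0}), which goes to zero as $n\to\infty$ for each \emph{fixed finite} $I$ but not uniformly on $\R$. The perturbation lemma is therefore applied on compact intervals, with the key observation that the resulting bound $C(M,E)$ is independent of $I$; the global conclusion then follows by the blow-up criterion. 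You should make this finite-interval step explicit rather than claiming $\|eq(w_n)\|_{ST^\ast(\R)}\to 0$ directly.
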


\begin{proof}
We only need to verify the conditions of Lemma \ref{long}. For this
purpose, by \eqref{errsm}, we derive that $u_{(n)}^{<k}+\omega_n^k$
satisfies that
\begin{align*}
(i\partial_{t}+\langle\nabla\rangle)\big(\vec{u}_{(n)}^{<k}+\vec{\omega}_n^k\big)
=-\langle\nabla\rangle^{s_c-1}\Big[f(u_{(n)}^{<k}+\omega_n^{k})+eq\big(u_{(n)}^{<k},\omega_n^{k}\big)\Big],
\end{align*}
where the error term $eq\big(u_{(n)}^{<k},\omega_n^{k}\big)$ is
\begin{align*}
eq\big(u_{(n)}^{<k},\omega_n^{k}\big)=
&\sum_{j<k}\langle\nabla\rangle^{1-s_c}\big(\langle\nabla\rangle-\langle\nabla\rangle_\infty^j\big)
\vec{u}_{(n)}^j+\Big[f(u_{(n)}^{<k}+\omega_n^{k})-f\big(u_{(n)}^{k}\big)\Big]
\\&+f(u_{(n)}^{<k})-\sum\limits_{j=0}^{k-1}
\Big(\frac{\langle\nabla\rangle_\infty^j}{\langle\nabla\rangle}\Big)^{s_c-1}
f\Big(\Big(\frac{\langle\nabla\rangle}{\langle\nabla\rangle_\infty^j}\Big)^{s_c}
u_{(n)}^j\Big).
\end{align*}

 First, by the definition of the nonlinear
concentrating wave $u_{(n)}^j$ and \eqref{lea}, we have
\begin{align}\nonumber
\Big\|\big(\vec{u}_{(n)}^{<k}(0)+\vec{w}_n^k(0)\big)-\vec{u}_n(0)\Big\|_{L^2_x}&\leq\sum\limits_{j=0}^{k-1}
\big\|\vec{u}_{(n)}^j(0)-\vec{u}_n^j(0)\big\|_{L^2_x}\rightarrow0,
\end{align}
as $n\rightarrow +\infty.$ This verifies the condition
\eqref{eq2.22} by the Strichartz estimate.

Next, by the linear profile decomposition in Lemma \ref{lem3.1}, we
get by \eqref{orth}
\begin{equation}\label{ee5}
\|\vec{u}_n(0)\|_{L^2}^2=\|\vec{v}_n(0)\|_{L^2}^2\geq\sum\limits_{j=0}^{k-1}\|\vec{v}_n^j(0)\|_{L^2}^2+o_n(1)
=\sum\limits_{j=0}^{k-1}\|\vec{u}_{(n)}^j(0)\|_{L^2}^2+o_n(1).
\end{equation}
Hence except for a finite set $J\subset\mathbb{N}$, the
$H^{s_c}_x\times H^{s_c-1}_x$-norm of
$\big(u_{(n)}^j(0),\dot{u}_{(n)}^j(0)\big)$ with $j\not\in J$ is
smaller than the iteration threshold (the small data scattering,
Theorem \ref{small}), and so
$$\|u_{(n)}^j\|_{ST(\mathbb{R})}\lesssim\|\vec{u}_{(n)}^j(0)\|_{L^2_x},\quad
j\not\in J.$$ This together with \eqref{equ3.8}, \eqref{equ3.9},
\eqref{equ3.11} and \eqref{ee5} yield that for any finite interval
$I$
\begin{align}\nonumber
\sup\limits_{k}\varlimsup\limits_{n\rightarrow\infty}\|u_{(n)}^{<k}\|_{ST(I)}^2&\lesssim\sum\limits_{j\in
J}\|u_{(n)}^{j}\|_{ST(I)}^2+\sum\limits_{j\not\in
J}\|u_{(n)}^j\|_{ST(\mathbb{R})}^2\\\label{ee6}
&\lesssim\sum\limits_{j\in
J}\|\hat{U}_\infty^j\|_{ST_\infty^j(\mathbb{R})}^2+\varlimsup\limits_{n\rightarrow\infty}\|\vec{u}_n(0)\|_{L^2}^2<+\infty.
\end{align}
This together with the Strichartz estimate for $\omega_n^k$ implies
that
$$\sup\limits_{k}\varlimsup\limits_{n\rightarrow\infty}\|u_{(n)}^{<k}+\omega_n^k\|_{ST(I)}<+\infty.$$
By the similar argument as above, we obtain for large $n$
$$\big\|\vec{u}_{(n)}^{<k}+\vec{w}_n^k\big\|_{L_t^\infty L_x^2}\leq
E_0.$$ Hence we verify the conditions \eqref{eq2.20} and
\eqref{equ2.201}.

It remains to verify the condition \eqref{equ2.21}. Using Lemma
\ref{lem3.1} and Lemma \ref{lem3.3}, we have
$$\big\|f(u_{(n)}^{<k}+\omega_n^k)-f(u_{(n)}^{<k})\big\|_{ST^*(I)}\rightarrow
0,$$ and
$$\bigg\|f\big(u_{(n)}^{<k}\big)-\sum\limits_{j=0}^{k-1}
\Big(\frac{\langle\nabla\rangle_\infty^j}{\langle\nabla\rangle}\Big)^{s_c-1}
f\Big(\Big(\frac{\langle\nabla\rangle}{\langle\nabla\rangle_\infty^j}\Big)^{s_c}
u_{(n)}^j\Big)\bigg\|_{ST^*(I)}\rightarrow 0,$$ as
$n\rightarrow+\infty.$ On the other hand, the linear part in
$eq\big(u_{(n)}^{<k},\omega_n^{k}\big)$ vanishes if $h_\infty^j=1$,
and from \eqref{equ1.6.1}, we know that it is controlled if
$h_\infty^j=0$ by
\begin{align}\nonumber
\Big\|\langle\nabla\rangle^{1-s_c}\big(\langle\nabla\rangle-|\nabla|\big)\vec{u}_{(n)}^j\Big\|_{L_t^1(I;
B^{s_c-1}_{2,2})}
\lesssim&|I|\cdot\Big\|\langle\nabla\rangle^{-1}\vec{u}_{(n)}^j\Big\|_{L_t^
\infty(\R; L_x^2)}\\\nonumber
\simeq&|I|\cdot\Big\|\langle\nabla/h_n^j\rangle^{-1}\vec{U}_{\infty}^j\Big\|_{L_t^
\infty(\R; L_x^2)}\\\label{xuxy0} \to&0,\quad\text{as}\quad
n\to+\infty,
\end{align}
by the continuity in $t$ and Lebesgue domainted convergence theorem for bounded $t$,
and by the scattering of $\hat{U}_\infty^j$ for $t\to\pm\infty,$
which follows from
$\big\|\hat{U}_\infty^j\big\|_{ST_\infty^j(\R)}<+\infty,$ and again
Lebesgue domainted convergence theorem.

Thus,
$\big\|eq\big(u_{(n)}^{<k},\omega_n^{k}\big)\big\|_{ST^\ast(I)}\to0,$
as $n\to+\infty.$

Therefore, for $k$ sufficiently close to $K$ and $n$ large enough,
the true solution $u_n$ and the near solution
$u_{(n)}^{<k}+\omega_n^k$ satisfy all the assumptions of the
perturbation   Lemma \ref{long}. Thus, we conclude this Theorem.
\end{proof}

\section{Concentration Compactness}
\setcounter{section}{4}\setcounter{equation}{0}

Using the profile decomposition in the previous section and the
perturbation theory, we argue in this section that if the scattering
result does not hold, then there must exist a minimal solution with
some good compactness properties.

\begin{proposition}\label{prop4.1}
 Suppose that  $E_{c}<+\infty$. Then there exists a
global solution $u_c$ of \eqref{equ1} satisfying
\begin{equation}\label{critical1}
\sup_{t\in \R}\big\|(u_c, \dot{u}_c)\big\|_{H^{s_c}\times
H^{s_c-1}}=E_{c},~\text{and}~ \|u_c\|_{ST(\mathbb{R})}=+\infty.
\end{equation}
Moreover, there exists $x(t):\mathbb{R}\rightarrow\mathbb{R}^d$ such
that the set $K=\big\{(u_c,\dot{u}_c)(t,x-x(t))\ \big|\
t\in\mathbb{R}^+\big\}$ is precompact in $H^{s_c}(\R^d)\times
H^{s_c-1}(\R^d)$.

\end{proposition}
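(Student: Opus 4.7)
The plan is to follow the Kenig--Merle concentration-compactness recipe, using the linear/nonlinear profile decomposition machinery developed in Section 3. I would start by choosing a sequence of maximal-lifespan solutions $u_n: I_n \times \mathbb{R}^d \to \mathbb{R}$ to \eqref{equ1} realizing criticality in the sense
\begin{equation*}
\sup_{t \in I_n} \|(u_n,\dot u_n)\|_{H^{s_c}\times H^{s_c-1}}^2 \longrightarrow E_c, \qquad S_{I_n}(u_n) \longrightarrow +\infty.
\end{equation*}
After a time translation I may assume $0 \in I_n$ and, by splitting at $t=0$, that $S_{[0,\sup I_n)}(u_n) \to +\infty$. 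I then apply Lemma \ref{lem3.1} to the free evolutions $\vec v_n(t) = e^{it\langle\nabla\rangle}\vec u_n(0)$ to extract profiles $\{\varphi^j\}$ with parameters $(t_n^j, x_n^j, h_n^j)$ and remainder $\vec \omega_n^k$.

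The heart of the argument is that only one profile can carry nontrivial mass. From the $L^2$-orthogonality \eqref{orth},
\begin{equation*}
E_c \;=\; \lim_{n\to\infty}\|\vec u_n(0)\|_{L^2}^2 \;\geq\; \sum_{j<k}\lim_{n\to\infty}\|\vec u_{(n)}^j(0)\|_{L^2}^2 \;+\; \lim_{n\to\infty}\|\vec\omega_n^k(0)\|_{L^2}^2.
\end{equation*}
If two or more of the summands were strictly positive, each would be bounded by $E_c - \delta$ for some $\delta > 0$. By the very definition of $E_c$ (and, in the $h_\infty^j = 0$ branch, by the global Strichartz bounds for the cubic energy-supercritical NLW cited in the introduction), the associated nonlinear profile $\vec U_\infty^j$ would satisfy the hypothesis \eqref{equ3.11} of Lemma \ref{precldes}. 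That lemma would then yield $\limsup_n S_{I_n}(u_n) < +\infty$, contradicting the choice of $u_n$. Hence there is exactly one nontrivial profile $\varphi^0$, and $\|\vec \omega_n^1\|_{L^2} \to 0$. The same dichotomy excludes $h_\infty^0 = 0$ and $|\tau_\infty^0| = +\infty$, since either case would again provide a global finite-Strichartz nonlinear profile and trigger Lemma \ref{precldes}. So after renormalization we may take $h_n^0 \equiv 1$, $\tau_\infty^0 = 0$, and set $u_c := U_\infty^0$, with $\vec u_c(0) = \varphi^0$. The equality $\|(u_c,\dot u_c)\|_{L_t^\infty(H^{s_c}\times H^{s_c-1})}^2 = E_c$ follows by lower-semicontinuity together with the orthogonality, and $\|u_c\|_{ST(I_{u_c})} = +\infty$ is forced by the divergence of $S_{I_n}(u_n)$ and the stability Lemma \ref{long}.

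Precompactness modulo spatial translation is obtained by running the one-profile argument once more: for any sequence $\tau_n \in [0, \sup I_{u_c})$, the translates $u_c(\cdot + \tau_n)$ again form a critical sequence, so the linear profile decomposition of $\vec u_c(\tau_n)$ has a unique nontrivial profile $\widetilde\varphi$ and vanishing remainder. Since the scale must be $h_n \equiv 1$ and the time parameter finite, this forces the existence of spatial centers $x(\tau_n) \in \mathbb{R}^d$ along which, up to a subsequence, $\bigl(u_c(\tau_n, \cdot - x(\tau_n)), \dot u_c(\tau_n, \cdot - x(\tau_n))\bigr)$ converges strongly in $H^{s_c} \times H^{s_c-1}$; this is exactly the precompactness of $K$. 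Globality is then a by-product: if $\sup I_{u_c} < +\infty$, take $\tau_n \uparrow \sup I_{u_c}$ and use the compactness to extract a strong limit of $(u_c(\tau_n), \dot u_c(\tau_n))$ in $H^{s_c}\times H^{s_c-1}$ after translation; the local theory (Theorem \ref{lwp}) then extends the solution past $\sup I_{u_c}$, a contradiction.

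The main obstacle I anticipate is the honest treatment of the two scaling branches $h_\infty^j \in \{0,1\}$ that are intrinsic to the Klein--Gordon profile decomposition. In particular, the $h_\infty^j = 0$ case produces a nonlinear profile governed by the massless (pure wave) equation rather than Klein--Gordon, and ruling out such profiles under the $E_c$-subcriticality assumption requires appealing to the energy-supercritical NLW results of Bulut in $d \geq 5$. Verifying the hypothesis \eqref{equ3.11} uniformly across the two branches, together with the mismatch of the Strichartz spaces $ST_\infty^j$, is the delicate point where the argument departs from its NLW analogue.
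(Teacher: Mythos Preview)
Your proposal is correct and follows essentially the same Kenig--Merle concentration-compactness route as the paper: extraction of a critical sequence, one-profile reduction via the orthogonality \eqref{orth} together with Lemma \ref{precldes}, appeal to Bulut's NLW results to exclude the $h_\infty^0=0$ branch, and then re-running the one-profile argument on time translates of $u_c$ to obtain precompactness and globality. The only cosmetic difference is ordering---the paper proves globality first (by a small-data continuation argument at the putative endpoint) and precompactness second, whereas you deduce globality from precompactness---but the ingredients are identical.
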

\begin{proof}
By the definition of $E_{c}$, we can choose a sequence of solutions
to \eqref{equ1}: $\{u_n(t):~I_n\times\R\to\R\}$ such that
\begin{equation}\label{defocusing} \sup_{t\in
I_n}\big\|(u_n,\dot{u}_n)\big\|_{H^{s_c}\times H^{s_c-1}}\rightarrow
E_{c},\ \text{and}\ \|u_n\|_{ST(I_n)}\rightarrow+\infty,\ \text{as}\
n\rightarrow+\infty.\end{equation}

By Lemma \ref{lem3.1}, we have
\begin{equation}
\begin{cases}
 e^{it\langle\nabla\rangle}\vec{u}_n(0)=\sum\limits_{j=0}^{k-1}
\vec{v}^j_n+\vec{w}_n^k,\
\vec{v}^j_n=e^{i\langle\nabla\rangle(t-t^j_n)}T_n^j\varphi^j(x),\\
u_{(n)}^{<k}=\sum\limits_{j=0}^{k-1}u_{(n)}^j,\
\vec{u}^j_{(n)}(t,x)=T_n^j\vec{U}_\infty^j\big((t-t_n^j)/h_n^j\big),\\
\|\vec{v}_n^j(0)-\vec{u}_{(n)}^j(0)\|_{L^2_x}\rightarrow 0, \ as\
n\rightarrow+\infty.
\end{cases}
\end{equation}
Observing that
\begin{enumerate}
\item it follows from the definition of $E_{c}$ that every solution of \eqref{equ1} with $L_t^\infty(I;
H^{s_c}_x\times H^{s_c-1}_x)$-norm less than $E_{c}$ has global
finite Strichartz norm.
\item Lemma \ref{precldes} precludes that all the nonlinear
profiles $\vec{U}_\infty^j$ have finite global Strichartz norm.
\end{enumerate}
and by \eqref{orth}, we derive that there is only one profile, i.e.
$K=1,$ and so for large $n$
\begin{equation}
\sup_{t\in I}\big\|(u_{(n)}^0,\dot{u}_{(n)}^0)\big\|_{H^{s_c}\times
H^{s_c-1}}= E_{c},\ \|\hat{U}_\infty^0\|_{ST_\infty^0(I)}=+\infty,\
\lim\limits_{n\rightarrow+\infty}\|\vec{\omega}_n^1\|_{L_t^\infty
L^2_x}=0.
\end{equation}
If $h_n^0\to0,$ then $\hat{U}_\infty^0=\Re|\nabla|^{-s_c}\vec
U_\infty^0$ solves the $\dot H^{s_c}_x(\R^d)$-critical wave equation
$$\partial_{tt}u-\Delta u+|u|^2u=0$$
and satisfies
$$\sup_{t\in I}\big\|(\hat
U_\infty^0,\pa_t\hat U_\infty^0)\big\|_{H^{s_c}\times
H^{s_c-1}}=E_{c}<+\infty,~\big\|\hat
U_\infty^0\big\|_{L_t^q(I;\dot{B}^\frac{d-3}2_{q,2})}=+\infty,~q=\frac{2(d+1)}{d-1}.$$
But Bulut has shown that there is no such solution in
\cite{Blut2012,Blut2011}. Therefore, $h_n^0\equiv1.$ And so there
exist a sequence $(t_n,x_n)\in \mathbb{R}\times\mathbb{R}^d$ and
$\phi\in L^2(\mathbb{R}^d)$ such that along some subsequence,
\begin{equation}\label{sequa}
\big\|\vec{u}_n(0,x)-e^{-it_n\langle\nabla\rangle}\phi(x-x_n)\big\|_{L^2_x}\rightarrow
0, ~\text{as}~ n\rightarrow+\infty.\end{equation}

Now we show that $\hat
U_\infty^0=\Re\langle\nabla\rangle^{-s_c}\vec{U}_\infty^j$ is a
global solution. If not, then there exist a sequence $t_n\in \R$
which approaches the maximal existence time. Noting that $\big(\hat
U_\infty^0(t+t_n),\pa_t\hat U_\infty^0(t+t_n)\big)$ satisfies
\eqref{defocusing}, and then by the same argument as \eqref{sequa},
we deduce  that there exist another sequence $(t_n',x_n')\in
\mathbb{R}\times\mathbb{R}^d$ and for some $\psi\in L^2$ so that
\begin{equation}\label{sequa1}
\big\|\vec{U}_\infty^0(t_n)-e^{-it_n'\langle\nabla\rangle}\psi(x-x_n')\big\|_{L^2_x}\rightarrow
0, \end{equation} as $n\rightarrow\infty.$ We write
$\vec{v}:=e^{it\langle\nabla\rangle}\psi.$ From Strichartz estimate,
we know that for any $\varepsilon>0,$ there exist $\delta>0$ with
$I=[-\delta,\delta]$ so that
$$\big\|\langle\nabla\rangle^{-s_c}\vec{v}(t-t_n')\big\|_{ST(I)}\leq\frac{\eta_0}2,$$
where $\eta_0=\eta(d)$ is the threshold from the small data theory.
This together with \eqref{sequa1} shows that for sufficiently large
$n$
$$\big\|\langle\nabla\rangle^{-s_c}e^{it\langle\nabla\rangle}\vec
U_\infty^0(t_n)\big\|_{ST(I)}\leq2\eta_0.$$ Hence, by the small data
theory (Theorem \ref{small}), we derive that the solution $\vec
U_\infty^0$ exists on $[t_n-\delta,t_n+\delta]$ for large $n$, which
contradicts with the choice of $t_n$. Thus $\hat U_\infty^0$ is a
global solution and it is just the desired critical element $u_c$.
Moreover, since \eqref{equ1} is symmetric in $t$, we may assume that
\begin{equation}\label{forward}
\|u_c\|_{ST(0,+\infty)}=+\infty.
\end{equation} We call such $u$ a forward critical element.

Next we prove the precompactness of $K$. It is equivalent to show
the precompactness of $\{\vec{u}(t_n)\}$ in $L^2_x$ for any
$t_1,t_2,\cdots >0.$ It is easy to prove this by the continuity in
$t$ when  $t_n$ converges. Thus, we can suppose that
$t_n\rightarrow+\infty$. Applying the property of \eqref{sequa} to
the sequence of solution $\vec u(t+t_n)$, we get another sequence
$(t_n^\prime,x_n^\prime)\in \mathbb{R}^d$ and $\phi\in L^2$ such
that
\begin{equation}\label{equ7.2}
\big\|\vec{u}(t_n,x)-e^{-it_n^\prime\langle\nabla\rangle}\phi(x-x_n^\prime)\big\|_{L^2_x}\rightarrow
0,\ n\rightarrow\infty.
\end{equation}
If $t_n^\prime\rightarrow-\infty$, then we obtain by triangle
inequality
\begin{align*}
\|\langle\nabla\rangle^{-s_c}e^{it\langle\nabla\rangle}\vec{u}(t_n)\|_{ST(0,\infty)}\leq&
\|\langle\nabla\rangle^{-s_c}e^{it\langle\nabla\rangle}\big(\vec{u}(t_n)-e^{-it_n^\prime\langle\nabla\rangle}
\phi(x-x_n^\prime)\big)\|_{ST(0,\infty)}\\
&\ +\|\langle\nabla\rangle^{-s_c}e^{i(t-t_n^\prime)\langle\nabla\rangle}\phi(x-x_n^\prime)\|_{ST(0,\infty)}\\
\lesssim&\|\vec{u}(t_n,x)-e^{-it_n^\prime\langle\nabla\rangle}\phi(x-x_n^\prime)\|_{L^2_x}+
\|\langle\nabla\rangle^{-s_c}e^{it\langle\nabla\rangle}\phi\|_{ST(-t_n^\prime,\infty)}\\
\rightarrow&\ 0.
\end{align*}
Thus, by the small data theory, we can solve $u$ for $t<t_n$ with
large $n$ globally, which contradicts with its forward criticality.

If $t_n^\prime\rightarrow+\infty$, then one has
$$\big\|\langle\nabla\rangle^{-s_c}
e^{it\langle\nabla\rangle}\vec{u}(t_n)\big\|_{ST(-\infty,0)}=\big\|\langle\nabla\rangle^{-s_c}
e^{it\langle\nabla\rangle}\phi\big\|_{ST(-\infty,-t_n^\prime)}+o(1)\rightarrow
0.$$ Hence, we can solve $u$ for $t<t_n$ with large $n$ with
diminishing Strichartz norms. We give a contradiction since $u=0$ by
taking the limit.

Thus, $t_n'$ is bounded, which shows that $\{t_n^\prime\}$ is
precompact, so is $\vec{u}(t_n,x+x_n^\prime)$ in $L^2_x$ by
\eqref{equ7.2}.
\end{proof}

As a direct consequence of the above proposition, we have
\begin{corollary}\label{quick} (Compactness)
Let $u$ be a forward critical element. Then, for any $\eta>0$, there
exist $x(t):\R^+\to\R^d$ and $C(\eta)>0$ such that
\begin{equation}\label{comp} \sup_{t\in\R^+}\int_{|x-x(t)|\geq
C(\eta)}\Big(\big|\langle\nabla\rangle^{s_c}
u\big|^2+\big|\langle\nabla\rangle^{s_c-1}\dot{u}\big|^2\Big)dx\leq\eta.
\end{equation}
We refer to the function  $x(t)$ as the spatial center function, and
to $C(\eta)$ as the compactness modules function.
\end{corollary}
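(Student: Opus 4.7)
My plan is to deduce Corollary \ref{quick} directly from the $H^{s_c}\times H^{s_c-1}$-precompactness of the orbit $K$ furnished by Proposition \ref{prop4.1}, by translating precompactness into a uniform tail estimate. Fix $\eta>0$. Since $K$ is precompact in $H^{s_c}(\R^d)\times H^{s_c-1}(\R^d)$, it is totally bounded, so I would cover it by finitely many balls of radius $\sqrt{\eta/4}$ with centers $(\phi_1,\psi_1),\ldots,(\phi_N,\psi_N)\in H^{s_c}(\R^d)\times H^{s_c-1}(\R^d)$.

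For each center, both $\langle\nabla\rangle^{s_c}\phi_j$ and $\langle\nabla\rangle^{s_c-1}\psi_j$ lie in $L^2(\R^d)$, so by dominated convergence there is a radius $C_j>0$ with
\[
\int_{|x|\geq C_j}\Big(|\langle\nabla\rangle^{s_c}\phi_j|^2+|\langle\nabla\rangle^{s_c-1}\psi_j|^2\Big)\,dx \leq \eta/4.
\]
Setting $C(\eta):=\max_{1\leq j\leq N}C_j<+\infty$ yields a tail estimate that is uniform across the finite cover. For any given $t\in\R^+$, I would pick $j=j(t)$ such that the translated pair $(u_c,\dot u_c)(t,\cdot-x(t))$ lies in the $j(t)$-th ball. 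After performing the change of variables $y=x-x(t)$ in the tail integral \eqref{comp}, and exploiting the fact that the Fourier multipliers $\langle\nabla\rangle^{s_c}$ and $\langle\nabla\rangle^{s_c-1}$ commute with spatial translations, the integral is reduced to a tail integral in $y$ for the recentered profile. Splitting the recentered profile as $(\phi_{j(t)},\psi_{j(t)})$ plus the small remainder from the covering, and invoking the elementary inequality $|a|^2\leq 2|b|^2+2|a-b|^2$, the tail is bounded by
\[
2\int_{|y|\geq C(\eta)}\Big(|\langle\nabla\rangle^{s_c}\phi_{j(t)}|^2+|\langle\nabla\rangle^{s_c-1}\psi_{j(t)}|^2\Big)\,dy + 2\big\|(\phi_{j(t)}-\cdot,\psi_{j(t)}-\cdot)\big\|_{H^{s_c}\times H^{s_c-1}}^2 \leq 2(\eta/4)+2(\eta/4)=\eta,
\]
which is precisely \eqref{comp}.

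The argument is essentially a textbook consequence of total boundedness together with the elementary fact that any single $L^2$ function has vanishing tail at infinity. There is no genuine obstacle here; the only point requiring verification is the translation-invariance of $\langle\nabla\rangle^{s}$, which is immediate from its definition as a Fourier multiplier. The substantive content of these compactness statements is concentrated in Proposition \ref{prop4.1}; the corollary merely reformulates the qualitative precompactness into the quantitative \emph{compactness modulus} form $(x(t),C(\eta))$ that will be needed in Section~5 for the Morawetz inequality and concentration-of-potential-energy arguments.
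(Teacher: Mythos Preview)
Your argument is correct and is precisely the standard total-boundedness-to-uniform-tail argument that the paper has in mind; indeed, the paper does not even write out a proof, stating the corollary ``as a direct consequence of the above proposition.'' Your proof simply makes this explicit, using the same $x(t)$ furnished by Proposition~\ref{prop4.1}.
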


We remark that the small data theory shows that the
$H^{s_c}_x(\R^d)\times H^{s_c-1}_x(\R^d)$ norm of a blowup solution
must remain bounded from below. The fact that this norm is nonlocal
in odd space dimensions  reduces the efficacy of this statement. Our
next lemma gives a lower bound in a more suitable norm, and also a
mild control of $x(t)$.

\begin{lemma}\label{mild}
Let $u$ be a nonlinear strong solution of \eqref{equ1} as in
Proposition \ref{prop4.1}. Then
\begin{enumerate}
\item $((\nabla_{t,x}u,u)$ nontrivially$)$\quad We have
\begin{equation}\label{nont}
\inf\limits_{t\in\R^+}\int_{\R^d}\Big(|u|^\frac{d}2+|\nabla_{t,x}
u|^\frac{d}2\Big)dx\gtrsim1.\end{equation}
\item $($Control of $x(t))$ For some
large constant $C_u$, we have for any $t_1, t_2\in\R^+$
\begin{equation}\label{xtc}
|x(t_1)-x(t_2)|\leq|t_1-t_2|+2C_u.
\end{equation}
\end{enumerate}
\end{lemma}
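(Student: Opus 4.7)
The plan is to prove (1) by a contradiction argument combining the precompactness of the orbit (Proposition 4.1) with the small-data theory (Theorem 2.3), and then to derive (2) from (1) by exploiting finite speed of propagation together with the uniform tail decay supplied by Corollary 4.2.

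For (1), I would argue by contradiction. Suppose there is $t_n\in\mathbb{R}^+$ along which $\int_{\mathbb{R}^d}(|u|^{d/2}+|\nabla_{t,x}u|^{d/2})\,dx\to 0$. By Proposition 4.1, up to a subsequence the translates $(u(t_n,\cdot+x(t_n)),\dot u(t_n,\cdot+x(t_n)))$ converge in $H^{s_c}\times H^{s_c-1}$ to some $(\phi_0,\phi_1)$. For $d\geq 5$ the inhomogeneous Sobolev embedding $H^{s_c-1}(\mathbb{R}^d)\hookrightarrow L^{d/2}(\mathbb{R}^d)$ is continuous (the Sobolev exponent is $2d/(d-2(s_c-1))=d/2$), so $\nabla u(t_n,\cdot+x(t_n))\to\nabla\phi_0$ and $\dot u(t_n,\cdot+x(t_n))\to\phi_1$ in $L^{d/2}$. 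Since the $L^{d/2}$-norm is translation-invariant and the original integrals vanish, the limits satisfy $\nabla\phi_0=0$ and $\phi_1=0$. As $\phi_0\in H^{s_c}(\mathbb{R}^d)$ admits no nonzero constant representative, $\phi_0=0$. Hence $\|(u(t_n),\dot u(t_n))\|_{H^{s_c}\times H^{s_c-1}}\to 0$, so for $n$ large Theorem 2.3 yields scattering of $u$ on $[t_n,\infty)$, contradicting $\|u\|_{ST(\mathbb{R}^+)}=+\infty$.

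For (2), let $c_0>0$ be the lower bound obtained in (1). The precompactness of $\{(u,\dot u)(t,\cdot-x(t))\}$ in $H^{s_c}\times H^{s_c-1}$ passes through the Sobolev embedding to give precompactness of $\{\nabla_{t,x}u(t,\cdot-x(t))\}$ in $L^{d/2}$; equivalently, there is $R_0>0$ such that
\begin{equation*}
\int_{|x-x(t)|\geq R_0}|\nabla_{t,x}u(t)|^{d/2}\,dx\leq c_0/4,\qquad\forall\,t\geq 0.
\end{equation*}
Using the same compactness together with standard commutator estimates, we may also choose $\eta>0$ (below the small-data threshold of Theorem 2.3) and arrange that a smooth cutoff $\chi_t$, equal to $0$ on $B(x(t),R_0/2)$ and to $1$ on $\{|x-x(t)|\geq R_0\}$, satisfies $\|\chi_t u(t)\|_{H^{s_c}}+\|\chi_t\dot u(t)\|_{H^{s_c-1}}\leq\eta$ uniformly in $t$. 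Fix $t_1<t_2$, set $\Delta t=t_2-t_1$, and let $v$ be the solution of \eqref{equ1} with data $(\chi_{t_1}u(t_1),\chi_{t_1}\dot u(t_1))$. Taking $\eta$ small enough, Theorem 2.3 gives a global $v$ with $\sup_t\int|\nabla_{t,x}v|^{d/2}\,dx\leq c_0/4$. A standard cone-energy argument (finite speed of propagation for the Klein-Gordon equation) yields $u(t)=v(t)$ on $\{|x-x(t_1)|\geq R_0+|t-t_1|\}$, whence
\begin{equation*}
\int_{|x-x(t_1)|\geq R_0+\Delta t}|\nabla_{t,x}u(t_2)|^{d/2}\,dx\leq c_0/4.
\end{equation*}
On the other hand, (1) and the exterior tail bound at $t_2$ give $\int_{B(x(t_2),R_0)}|\nabla_{t,x}u(t_2)|^{d/2}\,dx\geq 3c_0/4$. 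If $|x(t_1)-x(t_2)|>2R_0+\Delta t$, then $B(x(t_2),R_0)\subset\{|x-x(t_1)|\geq R_0+\Delta t\}$, contradicting the previous two displays. Hence $|x(t_1)-x(t_2)|\leq 2R_0+\Delta t$, which is \eqref{xtc} with $C_u=R_0$.

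The main obstacle I anticipate is the handling of the cutoff in (2): one must verify that $(\chi_{t_1}u(t_1),\chi_{t_1}\dot u(t_1))$ remains in the small-data regime of Theorem 2.3 uniformly in $t_1$, which reduces to bounding $[\langle\nabla\rangle^{s_c},\chi_{t_1}]$ using the exterior smallness provided by Corollary 4.2. A secondary subtlety in (1) is that $H^{s_c}$-convergence of the shifts does not directly control the $|u|^{d/2}$ integrand; the argument bypasses this by passing to the limit at the level of $\nabla_{t,x}u$ via $H^{s_c-1}\hookrightarrow L^{d/2}$ and inferring $\phi_0=0$ from $\nabla\phi_0=0$ together with $\phi_0\in H^{s_c}(\mathbb{R}^d)$.
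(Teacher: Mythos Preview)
Your proof is correct and follows essentially the same route as the paper: part (1) via precompactness plus the small-data lower bound (the paper phrases this as a positive infimum of the ratio $\|(\dot u,\langle\nabla\rangle u)\|_{L^{d/2}}/\|(\dot u,\langle\nabla\rangle u)\|_{H^{s_c-1}}$ over the compact orbit, which is your contradiction argument unwound), and part (2) via the small-data theory applied to cutoff initial data, finite speed of propagation, and the lower bound from (1). The commutator/cutoff issue you flag is indeed present and is glossed over in the paper as well.
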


\begin{proof} The proof is similar to \cite{KV}. But we  give a
sketch for the sake of completeness.

(1) It follows from the small data theory that
\begin{equation}\label{smlow}
\inf_{t\in\R^+}\big\|(u,u_t)\big\|_{H^{s_c}\times
H^{s_c-1}}\gtrsim1,
\end{equation}
otherwise $u$ would have finite spacetime norm which contradicts
with \eqref{critical1}.

On the other hand, it is easy to see that
$$\frac{\|f\|_{L_x^\frac{d}2(\R^d)}}{\|f\|_{H^{s_c-1}_x(\R^d)}}>0,$$
for any nonzero $\R^{1+d}$-valued $f\in H^{s_c-1}_x(\R^d)$. We note
that
\begin{equation*}
\begin{cases}
(i)\ \text{This ratio achieves a nonzero minimum on any compact set}\\
\qquad\text{that does not contain the zero function;}\\
(ii)\ \text{this ratio is invariant under translation;}\\
(iii)\ f:=\big(u_t,\langle\nabla\rangle u\big)~\text{and the set}\ K
\ \text{is precompact in}\ H^{s_c}_x(\R^d)\times H^{s_c-1}_x(\R^d).
\end{cases}
\end{equation*}
Combining these facts with \eqref{smlow}, we obtain that this ratio
is bounded from below, and so \eqref{nont} follows.

(2) Choose $\eta>0$ to be a small constant below the
$H^{s_c}_x(\R^d)\times H^{s_c-1}_x(\R^d)$ threshold for the small
data theory. By Corollary \ref{quick}, there is a constant
$C(\eta)>0$ such that
\begin{equation}\label{xcz}
\Big\|\phi\Big(\frac{x-x(t_1)}{C(\eta)}\Big)u(t_1,x)\Big\|_{{H}^{s_c}_x(\R^d)}+
\Big\|\phi\Big(\frac{x-x(t_1)}{C(\eta)}\Big)u_t(t_1,x)\Big\|_{{H}^{s_c-1}_x(\R^d)}\leq\eta,
\end{equation}
where \ $\phi:\R^d\to[0,+\infty),$
\begin{equation}
\phi(x)=\begin{cases} 1,\quad |x|\geq1,\\
0,\quad |x|\leq\frac12.
\end{cases}
\end{equation}
Hence, by the small data theory, there is a global solution to
\eqref{equ1} whose Cauchy data at time $t_1$ match the combination
of $\phi$ and $u$ given in \eqref{xcz}. Moreover, from the small
data theory, each critical Strichartz norm of this solution is
controlled by a multiple of $\eta$. It follows from domain of
dependence arguments that this new solution agrees with the original
$u$ on the set
$$\Omega(t):=\big\{x:~|x-x(t_1)|\geq|t-t_1|+C(\eta)\big\},\quad
t\in\R$$ and so by Sobolev embedding,
$$\big\|(u,\nabla_{t,x}u)\big\|_{L_x^{\frac{d}{2}}(\Omega(t))}\lesssim\eta,~\forall~t\in\R.$$
In particular, taking $t=t_2$, we get
\begin{equation}\label{t1j}
\int_{|x-x(t_1)|\geq|t_2-t_1|+C(\eta)}\Big(|u(t_2,x)|^\frac{d}2+|\nabla_{t,x}u(t_2,x)|^{\frac{d}{2}}\Big)dx\lesssim\eta.
\end{equation}
On the other hand, we have by Corollary \ref{quick} and Sobolev
embedding,
\begin{equation}
\int_{|x-x(t_2)|\geq
C(\eta)}\Big(|u(t_2,x)|^\frac{d}2+|\nabla_{t,x}u(t_2,x)|^{\frac{d}{2}}\Big)dx\lesssim\eta,
\end{equation}
This together with \eqref{nont} and \eqref{t1j} yield that
$$\big\{x:~|x-x(t_1)|\leq|t_2-t_1|+C(\eta)\big\}\cap\big\{x:~|x-x(t_2)|\leq
C(\eta)\big\}\neq\emptyset.$$ This concludes the proof of
\eqref{xtc}.
\end{proof}

The next corollary shows that the potential energy of the critical
element must concentrate.
\begin{corollary}[Concentration of potential
energy]\label{cor4.2} Let $u$ be a nonlinear strong solution of
\eqref{equ1} such that the set $K$ defined in Proposition
\ref{prop4.1} is precompact in $H^{s_c}(\R^d)\times
H^{s_c-1}(\R^d)$, and $E(u,\dot{u})\neq 0.$ For every $\tau>0$,
there exists two positive numbers $\alpha(\tau,u)$ and
$\beta(\tau,u)$ such that, for all time $t,$ there holds that
\begin{align}\label{big}
\alpha\leq\int_t^{t+\tau}\int_{\mathbb{R}^d}|u(s,x)|^{4}dxds
\leq\beta,
\end{align}
Moreover, combining this with Corollary \ref{quick} and Sobolev
embedding, we have for large $C=C(u)$ and all $t$
\begin{align}\label{big1}
\int_t^{t+1}\int_{|x-x(t)|\leq C}|u(s,x)|^{4}dxds\gtrsim1.
\end{align}
\end{corollary}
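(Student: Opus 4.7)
The upper bound in \eqref{big} is essentially free from energy conservation. Because $d\geq 5$ gives $s_c\geq 3/2$, Sobolev embedding yields $H^{s_c}(\R^d)\hookrightarrow L^2\cap L^d\hookrightarrow L^4$, so $E(u,\dot u)$ is finite, and hence constant along the flow. Consequently $\|u(s)\|_{L^4}^4\le 4E(u_0,u_1)$ uniformly in $s$, which already gives $\beta:=4\tau E(u_0,u_1)$.

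For the lower bound in \eqref{big} I argue by contradiction. Suppose there exist $\tau>0$ and a sequence $t_n$ with $\int_{t_n}^{t_n+\tau}\|u(s)\|_{L^4}^4\,ds\to 0$. Setting $w_n(s,x):=u(s+t_n,x+x(t_n))$, each $w_n$ is again a solution to \eqref{equ1} by translation invariance. By the precompactness of $K$, along a subsequence the data $(w_n(0),\partial_t w_n(0))$ converge in $H^{s_c}\times H^{s_c-1}$ to some $(w_\infty^0,w_\infty^1)$, and Theorem \ref{lwp} produces a maximal-lifespan solution $w_\infty$ with these data; fix any $\tau_0\in(0,\tau]$ inside its lifespan. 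Since the $H^{s_c}\times H^{s_c-1}$ convergence of the data is equivalent to $L^2$ convergence of the associated $\vec{\,\cdot\,}$ variables, the perturbation Lemma \ref{long}, applied to the two genuine solutions $w_n,w_\infty$ (so that the $eq$-terms vanish), gives
\[
\bigl\|(w_n-w_\infty,\partial_t w_n-\partial_t w_\infty)\bigr\|_{L_t^\infty([0,\tau_0];H^{s_c}\times H^{s_c-1})}\longrightarrow 0,
\]
and Sobolev embedding upgrades this to uniform convergence in $L^4_x$ on $[0,\tau_0]$. Hence $\int_0^{\tau_0}\|w_n\|_{L^4}^4\,ds\to\int_0^{\tau_0}\|w_\infty\|_{L^4}^4\,ds$, while the left-hand side is bounded by $\int_{t_n}^{t_n+\tau}\|u\|_{L^4}^4\,ds\to 0$. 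Therefore $w_\infty\equiv 0$ on $[0,\tau_0]$, which forces $(w_\infty^0,w_\infty^1)=(0,0)$. The continuity of $E$ on $H^{s_c}\times H^{s_c-1}$ combined with translation invariance and energy conservation then produces
\[
E(u,\dot u)=E\bigl(w_n(0),\partial_t w_n(0)\bigr)\longrightarrow E(w_\infty^0,w_\infty^1)=0,
\]
contradicting the standing hypothesis $E(u,\dot u)\ne 0$ and yielding the required $\alpha(\tau,u)>0$.

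For \eqref{big1} I use the lower bound with $\tau=1$ to obtain $\alpha:=\alpha(1,u)>0$ with $\int_t^{t+1}\|u(s)\|_{L^4}^4\,ds\ge\alpha$ for every $t\geq 0$. Because $H^{s_c}\hookrightarrow L^4$ continuously, precompactness of $K$ in $H^{s_c}\times H^{s_c-1}$ implies precompactness of $\{u(s,\cdot-x(s))\}_s$ in $L^4(\R^d)$, so a standard approximation argument furnishes, for every $\eta>0$, a radius $R(\eta)>0$ such that $\int_{|x-x(s)|>R(\eta)}|u(s,x)|^4\,dx<\eta$ uniformly in $s$. Taking $\eta=\alpha/2$ and invoking the transport estimate $|x(s)-x(t)|\le|s-t|+2C_u\le 1+2C_u$ from Lemma \ref{mild} for $s\in[t,t+1]$, the inclusion $\{|x-x(s)|\le R\}\subset\{|x-x(t)|\le R+1+2C_u\}$ yields \eqref{big1} with $C:=R(\alpha/2)+1+2C_u$.

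The principal difficulty lies in the lower bound: it is necessary to extract a limit of the shifted solutions $w_n$ in a topology strong enough to pass the $L^4$ integrals to the limit, but the $H^{s_c}\times H^{s_c-1}$ a priori bound only secures the limiting solution $w_\infty$ on some small interval $[0,\tau_0]$, not on the full $[0,\tau]$; the perturbation Lemma \ref{long} is precisely the tool that upgrades data convergence into uniform $H^{s_c}\times H^{s_c-1}$ convergence of the actual flows on such a small uniform interval, and once $w_\infty\equiv 0$ there, the rigidity of energy conservation closes the contradiction.
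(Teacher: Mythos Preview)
Your proof follows essentially the same contradiction argument as the paper's, and in fact supplies more detail: the paper simply writes ``by wellposedness'' for the passage to the limit solution and omits any argument for \eqref{big1}, whereas you spell out the convergence step and derive \eqref{big1} via $L^4$-tightness together with the control on $x(t)$ from Lemma~\ref{mild}. One small point: Lemma~\ref{long} as stated only concludes $ST(I)$-closeness, not $L^\infty_t(H^{s_c}\times H^{s_c-1})$-convergence, so you should either invoke continuous dependence from the local theory (Theorem~\ref{small}/\ref{lwp}) directly, or bootstrap via the Strichartz estimate \eqref{str} applied to the difference equation once the $ST$-norm of $w_n-w_\infty$ is small.
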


\begin{proof}
The bound from above follows from Sobolev's inequality and
$\sup\limits_{t\in \R}\big\|(u_c, \dot{u}_c)\big\|_{H^{s_c}\times
H^{s_c-1}}=E_{c}<+\infty$. Suppose the bound from below is not true.
Then there exist $\tau>0$ and a sequence $t_k$ such that
\begin{equation}\label{equ4.12}
\int_0^\tau\int_{\R^d}\big|u(t+t_k,x-x(t_k))\big|^4dxdt=\int_{t_k}^{t_k+\tau}\int_{\mathbb{R}^d}|u(t,x)|^{4}dxdt<\frac1k.
\end{equation}
Using the precompactness of $K$, we can extract a subsequence and
assume that
$$\big(\tau_{x(t_k)}u(t_k),\tau_{x(t_k)}u_t(t_k)\big)~\rightarrow~(U_0,U_1)\quad\text{in}~H^{s_c}_x(\R^d)\times
H^{s_c-1}_x(\R^d).$$ Let $U:~I\times\R^d\to\R$ be the nonlinear
strong solution of \eqref{equ1} with initial data $(U_0,U_1)$ at
time $t=0$. Then, $E(U,\dot{U})=E(u,\dot{u})\neq0$. By wellposedness
and \eqref{equ4.12}, we get
$$\int_{[0,\tau]\cap I}\int_{\mathbb{R}^d}|U(t,x)|^{4}dxdt=0$$
Hence, we have $U(t)=0$ for all $t$ in $(0,\tau)\cap I$, hence $U_t(t)=0$
for all such $t$. Consequently, $E(u,\dot{u})=0$. This is a
contradiction.
\end{proof}

\section{Extinction of the critical element}
\setcounter{section}{5}\setcounter{equation}{0}
 In this section, we
prove that the critical solution constructed in Section 4 does not
exist, thus ensuring that $E_{c} = +\infty$. This implies Theorem
\ref{theorem}.

\begin{proposition}\label{infty} There are no solutions to
\eqref{equ1} in the sense of Proposition \ref{prop4.1}.
\end{proposition}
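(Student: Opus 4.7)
The plan is to argue by contradiction along the lines sketched in the introduction: if a forward critical element $u$ as in Proposition \ref{prop4.1} existed, then the Morawetz inequality would bound $\int_0^T\!\int_{\R^d}|u|^4/|x|\,dx\,dt$ uniformly in $T$ by the energy, whereas the compactness and concentration properties established in Section~4 will force a $\log T$ lower bound on the same quantity, producing a contradiction for $T$ sufficiently large.

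First I would verify that the energy
$$E(u,\dot u)=\tfrac12\int_{\R^d}\bigl(|\dot u|^2+|\nabla u|^2+|u|^2\bigr)\,dx+\tfrac14\int_{\R^d}|u|^4\,dx$$
is finite and conserved along the flow. Under the hypothesis $\sup_{t\in\R^+}\|(u,\dot u)\|_{H^{s_c}\times H^{s_c-1}}=E_c<+\infty$ with $s_c=\tfrac{d}{2}-1\geq\tfrac{3}{2}$ for $d\geq 5$, one immediately obtains the uniform $H^1\times L^2$ bound, and by interpolating the embedding $H^{s_c}\hookrightarrow L^d$ against the $L^2$ bound, the uniform control of $\|u(t)\|_{L^4}$. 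Consequently $E(u,\dot u)$ is finite and conserved, and the Morawetz inequality \eqref{moraw} of Brenner/Morawetz--Strauss furnishes
$$\int_0^T\!\int_{\R^d}\frac{|u(t,x)|^4}{|x|}\,dx\,dt\leq C\,E(u,\dot u)$$
uniformly in $T>0$, with $C=C(u)$.

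For the matching lower bound I would combine Corollary~\ref{cor4.2} with Lemma~\ref{mild}. Estimate~\eqref{big1} provides $R=R(u)>0$ and $c=c(u)>0$ such that for every $t\geq 0$,
$$\int_t^{t+1}\!\int_{|x-x(t)|\leq R}|u(s,x)|^4\,dx\,ds\geq c,$$
where $x(t)$ is the spatial center from Corollary~\ref{quick}. The kinematic bound \eqref{xtc} yields $|x(t)|\leq t+C_u'$, so that on the above integration region one has $|x|\leq t+\widetilde C$ for a constant $\widetilde C=\widetilde C(u)$, whence $1/|x|\geq 1/(t+\widetilde C)$ there. Summing over the unit intervals $[k,k+1)\subset[0,T]$ gives
$$\int_0^T\!\int_{\R^d}\frac{|u|^4}{|x|}\,dx\,dt\geq c\sum_{k=0}^{\lfloor T\rfloor-1}\frac{1}{k+\widetilde C+1}\gtrsim_u \log T,$$
which, compared with the Morawetz upper bound, yields a contradiction for $T$ large.

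The main obstacle is the lower-bound step: while the compactness of $K$ guarantees that the potential energy remains concentrated near $x(t)$ on each unit time slab, the trajectory $x(t)$ is a priori unbounded, and it is precisely the finite-speed-of-propagation estimate \eqref{xtc} that converts this geometric information into the pointwise control $1/|x|\gtrsim 1/t$ on the concentration region. A secondary technical point is the rigorous justification of \eqref{moraw} for the critical element $u$; this is where the finiteness of the classical energy established in the first step is needed, since the standard Morawetz multiplier computation can then be carried out for smooth approximants of $u$ and passed to the limit via the uniform $H^1\cap L^4$ and $L^2$ bounds on $u$ and $\dot u$.
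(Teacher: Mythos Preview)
Your proposal is correct and follows essentially the same route as the paper: contradiction via the Morawetz upper bound $\int_0^T\!\int|u|^4/|x|\,dx\,dt\lesssim E(u,\dot u)$ against a $\log T$ lower bound obtained from the concentration estimate \eqref{big1} and the control \eqref{xtc} on $x(t)$. The only differences are cosmetic---you spell out the finiteness of $E(u,\dot u)$ via $H^{s_c}\hookrightarrow H^1\cap L^4$ and address the approximation issue for Morawetz, whereas the paper simply asserts finite energy and normalizes $x(0)=0$ before invoking \eqref{xtc}.
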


\begin{proof}
We argue by contradiction. Assume there exists a solution $u:
\R^+\times\R^d\to \R$  such that the set $K$ defined in Proposition
\ref{prop4.1} is precompact in $H^{s_c}(\R^d)\times
H^{s_c-1}(\R^d)$. We will show that this scenario is inconsistent
with the following Morawetz inequality \cite{Br85,MC,MoS72}
\begin{equation}\label{mw}
\int_\R\int_{\R^d}\frac{|u(t,x)|^4}{|x|}dxdt\lesssim E(u,u_t).
\end{equation}

On one hand, since the solution $u$ has finite energy, the
right-hand side in the Morawetz inequality is finite and so
\begin{equation}\label{upper}
\int_0^T\int_{\R^d}\frac{|u(t,x)|^4}{|x|}dxdt\lesssim
E(u,u_t)\lesssim1,
\end{equation}
for any $T>0.$ On the other hand, we have concentration of potential
energy by Corollary \ref{cor4.2}. That is, there exists $C=C(u)$
such that
$$\int_{t_0}^{t_0+1}\int_{|x-x(t)|\leq C}|u(t,x)|^4dxdt\gtrsim 1,$$
for any $t_0\in\R$. Translating space so that $x(0)=0$ and employing
finite speed of propagation in the sense \eqref{xtc}
$$|x(t)-x(0)|\leq |t|+2c_u,$$
we deduce that for $T\geq1,$
\begin{align*}
\int_0^T\int_{\R^d}\frac{|u(t,x)|^4}{|x|}dxdt\gtrsim&\int_0^{T}\int_{|x-x(t)|\leq
C}\frac{|u(t,x)|^4}{|x|}dxdt\\
\gtrsim&\int_0^T\frac{dt}{1+t}\\
\gtrsim&\log(1+T),
\end{align*}
which contradicts with \eqref{upper} by choosing $T$ sufficiently
large depending on $u$. Hence we complete the proof of this
proposition.
\end{proof}

\vskip0.5cm

\textbf{Acknowledgements} The authors  were  supported by the NSF of China under
grant No.11171033, 11231006.

\begin{center}

\end{center}

\begin{thebibliography}{99}
\addcontentsline{toc}{section}{References}
\bibitem{BL76}J. Bergh and J. L\"{o}fstr\"{o}m, Interpolation
spaces, An introduciton. Grundlehren der Mathematischen
Wissenschaften, Springer-Verlag, Berlin-New York, 1976.

\bibitem{BiS75}M. S. Birman and S. Z. Solomjak, On estimates on
singular number of integral operators III, Vest. LSU Math. 2 (1975),
9-27.

\bibitem{Br84}P. Brenner, On space-time means and everywhere defined
scattering operators for  nonlinear Klein-Gordon equations, Math. Z.
186 (1984), 383-391.

\bibitem{Br85}P. Brenner, On scattering and everywhere defined
scattering operators for nonlinear Klein-Gordon equtaons, J.
Differential Equations 56 (1985), 310-344.

\bibitem{Blut2012} A. Bulut, Global well-posedness and scattering
for the defocusing energy-supercritical cubic nonlinear wave
equation. J. Func. Anal. 263 (2012), 1609-1660. MR2948225.

\bibitem{Blut2011} A. Bulut, The radial defocusing
energy-supercritical cubic nonlinear wave equation in $\R^{1+5}$.
Preprint, arXiv: 1104.2002.

\bibitem{Blut20115} A. Bulut, The defocusing energy-supercritical cubic nonlinear wave
equation in dimension five. Preprint arXiv:1112.0629.



\bibitem{CW} M. Christ and M. Weinstein, Dispersion of small
amplitude solutions of the generalized Korteweg-de Vries equation.
J. Funct. Anal. 100 (1991), 87-109. MR1124294.

\bibitem{CKSTT07}J. Colliander, M. Keel, G. Staffilani, H. Takaoka,
and T. Tao, Global well-posedness and scattering for the
energy-cirtical nonlinear Schr\"{o}dinger equation in
$\mathbb{R}^3$. Annals of Math. 167 (2008), 767-865.


\bibitem{DKM} T. Duyckaerts, C.Kenig and F.Merle, Scattering for radial, bounded solutions of focusing
supercritical wave equations. arXiv: 1208.2158v1.

\bibitem{DKM2011} T. Duyckaerts, C.Kenig and F.Merle, Universality
of blow-up profile for small radial type II blow-up solutions of the
energy-critical wave equation. J. Eur. Math. Soc., 13 (2011),
533-599.

\bibitem{GiV85b}J. Ginibre and G. Velo, Time decay of finite energy
solutions of the nonlinear Klein-Gordon and Schr\"{o}dinger
equations, Ann. Inst. H. Poincar\'{e} Phys. Th\'{e}or. 43 (1985),
399-442.

\bibitem{GiV95}J. Ginibre and G. Velo, Generalized Strichartz
inequalities for the wave equation, J. Funct. Anal., 133 (1995),
50-68.


\bibitem{IMN} S. Ibrahim, N. Masmoudi and K. Nakanishi, Scattering
threshold for the focusing nonlinear Klein-Gordon equation, Analysis
and PDE.,  4 (2011), 405-460.

\bibitem{IMN1} S. Ibrahim, N. Masmoudi and K. Nakanishi, Threshold
solutions in the case of mass-shift for the critical Klein-Gordon
equation, To appear in Tran. Amer. Math. Soc., arXiv:1110.1709v1.

\bibitem{KeT98}M. Keel and T. Tao, Endpoint Strichartz estimates. Amer. J.
Math. 120 (1998), 955-980.


\bibitem{KM} C. Kenig and F. Merle, Global well-posedness, scattering,
and blow-up for the energy-critical focusing nonlinear
Schr\"{o}dinger equation in the radial case, Invent. Math., 166
(2006), 645-675.

\bibitem{KM2011} C. Kenig and F. Merle, Nondispersive radial solutions to
energy supercritical nonlinaer wave equations, with applications.
Amer. J. Math., 133 (2011), 1029-1065.

\bibitem{KM2011D} C. Kenig and F. Merle, Radial solutions to energy
supercritical wave equations in odd dimensions. Disc. Cont. Dyn.
Sys. A, 4 (2011) 1365-1381.


\bibitem{KV2010} R. Killip and M. Visan,  Energy-supercritical NLS: critical
$\dot{H}^s$-bounds imply scattering. Comm. Partial Differential
Equations 35 (2010), 945-987. MR2753625.


\bibitem{KV} R. Killip and M. Visan, The defocusing
energy-supercritical nonlinear wave equation in three space
dimensions, Trans. Amer. Math. Soc., 363 (2011), 3893-3934.

\bibitem{KV2011} R. Killip and M. Visan, The radial defocusing
energy-supercritical nonlinear wave equation in all space
dimensions. Proc. Amer. Math. Soc., 139 (2011), 1805-1817.


\bibitem{MC} C. Miao, Modern methods to the nonlinear wave equations,  Lectures in
Contemporary Mathematics, Vol.2, Science Press, Beijing, 2010.



\bibitem{MZF} C. Miao, B. Zhang and D. Fang, Global well-posedness for
the Klein-Gordon equations below the energy norm, Journal of Partial
Differential Equations, 17 (2004), 97-121.



\bibitem{MoS72}C. Morawetz and W. A. Strauss, Decay and scattering
of solutions of a nonlinear relativistic wave equation, Comm. Pure
Appl. Math., 25 (1972), 1-31.

\bibitem{Na99b}K. Nakanishi, Scattering theory for nonlinear
Klein-Gordon equation with Sobolev critical power, Internat. Math.
Res. Notices, 1 (1999), 31-60.



\bibitem{Na01}K. Nakanishi, Remarks on the energy scattering for nonlinear
Klein-Gordon and Schr\"{o}inger equations. Tohoku Math. J., II.
 53 (2001), 285-303.





\bibitem{Taylor}
M.~Taylor. Tools for PDE, Mathematical Surveys and Monographs, vol.
81,\newblock {\em  Pseudodifferential operators, paradifferential
operators, and layer potentials}, American Mathematical Society,
Providence, RI, 2000. MR1766415 (2001g:35004)



\end{thebibliography}
\end{document}